\numberwithin{equation}{section}
\renewcommand{\labelenumi}{(\roman{enumi})}
\renewcommand{\d}{\mathrm{d}}
\newcommand{\E}{\mathcal{E}}
\renewcommand{\H}{\mathcal{H}}
\newcommand{\N}{{\mathbb N}}
\newcommand{\R}{\mathbb R}
\newcommand{\loc}{\mathrm{loc}}
\newcommand{\dom}{{\partial\Omega}}
\newcommand{\ddom}{{\partial_D \Omega}}
\newcommand{\ndom}{{\partial_N \Omega}}
\newcommand{\ind}{{I_{(-\infty,0]}}}
\newcommand{\dind}{{\partial \ind}}
\newcommand{\ztau}{{z_\tau}}
\newcommand{\ztaubar}{\overline{z}_\tau}
\newcommand{\ftau}{{f_\tau}}
\newcommand{\ftaubar}{\overline{f}_\tau}
\newcommand{\vep}{\varepsilon}
\newcommand{\vphi}{\varphi}
\newcommand{\wto}{\rightharpoonup}
\newcommand{\disp}{\displaystyle}
\newcommand{\mel}[1]{\MoveEqLeft{#1}}
\theoremstyle{plain}
\newtheorem{thm}{Theorem}[section]
\newtheorem{prop}[thm]{Proposition}
\newtheorem{lem}[thm]{Lemma}
\theoremstyle{definition}
\newtheorem{defi}[thm]{Definition}
\theoremstyle{remark}
\newtheorem{rem}[thm]{Remark}
\title[Evolution with irreversibility and energy conservation]{Evolution equations with complete irreversibility\\and energy conservation}
\author{Goro Akagi}
\address{Mathematical Institute and Graduate School of Science, Tohoku University, Sendai 980-8578, Japan}
\email{goro.akagi@tohoku.ac.jp}
\author{Kotaro Sato}
\address{Mathematical Institute, Tohoku University, Sendai 980-8578, Japan}
\email{kotaro.sato.t8@dc.tohoku.ac.jp}
\date{May 8, 2023}
\thanks{G.A.~is partially supported by JSPS KAKENHI Grant Number JP21KK0044, JP21K18581, JP20H01812, JP18K18715 and JP20H00117. K.S.~is partially supported by JSPS KAKENHI Grant Number JP21J20732.}
\keywords{Evolution equations with complete irreversibility and energy conservation, Long-time behavior of solutions, Obstacle problem, Phase-field models of brittle fracture}
\begin{document}

\begin{abstract}
This paper is concerned with the initial-boundary value problem for an evolutionary variational inequality complying with three intrinsic properties: \emph{complete irreversibility}, \emph{unilateral equilibrium of an energy} and \emph{an energy conservation law}, which cannot generally be realized in dissipative systems such as standard gradient flows. Main results consist of well-posedness in a strong formulation, qualitative properties of strong solutions (i.e., comparison principle and the three properties mentioned above) and long-time dynamics of strong solutions (more precisely, convergence to an equilibrium). The well-posedness will be proved based on a minimizing movement scheme without parabolic regularization, which will also play a crucial role for proving qualitative and asymptotic properties of strong solutions. Moreover, the variational inequality under consideration will be characterized as a singular limit of some (generalized) gradient flows.
\end{abstract}

\maketitle

\section{Introduction}\label{sec:intro}

Let $\Omega \subset \R^N$, $ N \geq 1 $ a bounded domain with Lipschitz boundary $\dom$ and let $T$ be a positive constant. Let $ \ddom $ and $ \ndom $ be relatively open subsets of $ \dom $ such that $ \ddom \cap \ndom = \emptyset $ and $ \H^{N-1}(\dom \setminus (\ddom \cup \ndom))=0 $, where $ \H^{N-1} $ denotes the $ (N-1) $-dimensional Hausdorff measure. The present paper is concerned with the following evolutionary variational inequality:
\begin{equation}\label{eq:vi}
\left\lbrace
\begin{alignedat}{4}
&\partial_t z \leq 0, \quad -\Delta z + \sigma z - f \leq 0 & \quad &\mbox{ in } \ \Omega \times (0,T),\\
&\partial_t z \left( -\Delta z + \sigma z - f \right) = 0 &&\mbox{ in } \ \Omega \times (0,T),
\end{alignedat}
\right.
\end{equation}
where $\sigma$ is a non-negative constant, equipped with the initial and boundary conditions,
\begin{equation}\label{bcic}
\left\lbrace
\begin{alignedat}{4}
&z = 0 & \quad &\mbox{ on } \ \ddom \times (0,T),\\
&\partial_\nu z = 0 &&\mbox{ on } \ \ndom \times (0,T),\\
&z(\cdot,0) = z_0 &&\mbox{ in } \ \Omega,
\end{alignedat}
\right.
\end{equation}
where $\partial_\nu$ stands for the outward normal derivative on $\partial_N \Omega$ and $z_0 = z_0(x)$ and $f = f(x,t)$ are prescribed data.
In a strong formulation (see Definition \ref{D:sol} below), the variational inequality \eqref{eq:vi} can be rewritten as a differential inclusion of the form,
\begin{equation}\label{eq}
\dind \left( \partial_t z \right) -\Delta z + \sigma z \ni f \quad \mbox{ in } \ \Omega \times (0,T),
\end{equation}
where $\ind : \R \to [0,+\infty]$ denotes the indicator function supported over $(-\infty, 0]$, i.e., $\ind(s) = 0$ if $s \leq 0$ and $\ind(s) = +\infty$ if $s > 0$, and $\dind : \R \to 2^\R$ stands for the subdifferential of $\ind$, that is,
\begin{align*}
\dind(s) &= \left\{ \xi \in \R \colon \xi(r-s) \leq 0 \quad \mbox{for } \, r \in \R \right\}\\
&= \begin{cases}
\{0\} &\mbox{ if } \ s < 0,\\
[0,+\infty) &\mbox{ if } \ s = 0,\\
\emptyset &\mbox{ if } \ s>0
\end{cases}
\quad \mbox{ for } \ s \in \R,
\end{align*}
with domain $D(\dind) = (-\infty,0]$.

On the other hand, set $H := L^2(\Omega)$ and 
\begin{equation}\label{V}
V := \{w \in H^1(\Omega) \colon \gamma(w) = 0 \quad \mbox{on } \, \partial_D \Omega\},
\end{equation}
where $\gamma : H^1(\Omega) \to H^{1/2}(\partial \Omega)$ stands for the trace operator (see also \S \ref{ssec:fsp} below). Define functionals $\psi,\varphi : H \to [0,+\infty]$ by
\begin{align}
\psi(w) &= I_K(w) \quad \mbox{ for } \ w \in H,\notag\\
\varphi(w) &= \begin{cases}\label{eq:EQ1}
\frac{1}{2} \int_\Omega |\nabla w(x)|^2 \, \d x + \frac \sigma 2 \int_\Omega \left| w(x) \right|^2 \d x \ &\mbox{ if } \ w \in V,\\
+\infty \ &\mbox{ otherwise}
\end{cases}
\quad \mbox{ for } \ w \in H,
\end{align}
where $I_K:H \to [0,+\infty]$ is the indicator function supported over the closed convex set $K := \{w \in H \colon w \leq 0 \ \mbox{ a.e.~in } \Omega\}$. Then \eqref{eq} equipped with \eqref{bcic} is reduced into the Cauchy problem for the following doubly-nonlinear evolution equation in $H$:
\begin{equation*}
\partial \psi (\partial_t z(t)) + \partial \varphi (z(t)) \ni f(t) \quad \mbox{ in } \ H, \quad 0 < t < T,
\end{equation*}
where $z(t):=z(\cdot,t)$, $f(t):=f(\cdot,t)$ and $\partial \psi, \partial \varphi : H \to 2^H$ stand for the subdifferential operators of $\psi$,~$\varphi$, respectively (see \S \ref{ssec:subdif} below). Such doubly-nonlinear evolution equations have been studied by many authors (see, e.g.,~\cite{Barbu75,Arai,Senba,CV,Colli,SSS,Roubicek,G15} and references therein). On the other hand, there seems no general theory which can cover the equation \eqref{eq} (or equivalently, \eqref{eq:vi}) (see also (i) of Remark \ref{rem:ex} below).

This study on \eqref{eq:vi}, \eqref{bcic} was originally inspired by variational models for brittle fracture, which will hence briefly be reviewed below. In order to extend the classical Griffith theory~\cite{Griffith} for brittle fracture, Francfort and Marigo~\cite{FM} introduced the so-called \emph{Francfort-Marigo energy} $\mathcal{F}(\vec{u},\Gamma;\vec{g})$, which is the sum of the elastic energy derived from the displacement field $\vec{u}$ and the surface energy of the crack set $\Gamma \subset \overline{\Omega}$ on an elastic body occupying $\Omega \subset \R^3$, and presented a quasistatic variational model as a minimization of $\mathcal{F}(\vec{u},\Gamma;\vec{g})$ in $(\vec{u}, \Gamma)$ subject to a (prescribed) boundary displacement field $ \vec{g} $ on $ \ddom $, a part of the boundary $\partial \Omega$, which is an interface between the elastic body and the source of the external forces (cf.~\eqref{bcic}) as well as an irreversibility for $\Gamma$. To be more precise, let us denote by $ \mathcal{F}(\Gamma;\vec{g}) $ the minimum value of $ \mathcal{F}(\vec{u},\Gamma;\vec{g}) $ with respect to $ \vec{u} : \Omega \setminus \Gamma \to \R^3 $ satisfying $ \vec{u} = \vec{g} $ on $ \ddom \setminus \Gamma $. Here the crack set $ \Gamma $ is excluded from the Dirichlet boundary for $\vec{u}$, since the boundary displacement is supposed to be not transmitted through the fractured part of the boundary. In~\cite{FM}, for the boundary displacement $\vec{g} = \vec{g}(x,t)$ varying in time, a \emph{quasi-static evolution of crack} $ t \mapsto \Gamma (t) $ on $[0,T]$ is defined to satisfy the following three conditions:
\begin{enumerate}
\item $ \Gamma(t_1) \subset \Gamma(t_2) \, $ for all $\, 0 \leq t_1 \leq t_2 \leq T$;
\item for each $ t \in [0,T]$, it holds that $ \mathcal{F}(\Gamma(t);\vec{g}(\cdot,t)) \leq \mathcal{F}(\Gamma;\vec{g}(\cdot,t)) $ for any closed subsets $ \Gamma \subset \overline{\Omega}$ satisfying $ \bigcup_{s<t} \Gamma(s) \subset \Gamma $;
\item the total energy function $ t \mapsto \mathcal{F}(\Gamma(t);\vec{g}(\cdot,t)) $ is absolutely continuous on $[0,T]$ and its derivative is balanced with a temporal variation of external forces.
\end{enumerate}
Condition (i) constrains the crack evolution to be completely \textit{irreversible}; (ii) can be regarded as a \emph{unilateral minimization of the energy}, that is, at each time, the crack set in the elastic body minimizes the Francfort-Marigo energy among virtual crack sets larger than the previous ones. It also describes a quasi-static evolution of the crack set; (iii) is none other than the \textit{energy conservation} of the whole system. In particular, crack never evolves unless the external load $\vec{g}(x,t)$ varies in time. All these conditions are intrinsic nature to crack evolution. For this variational fracture model, existence results on the quasi-static evolution $ t \mapsto \Gamma(t) $ have already been obtained, e.g., in~\cite{DT}, where anti-planar shear fracture in two-dimensional cylindrical elastic bodies is considered, and moreover, in~\cite{DFT}, where the result in~\cite{DT} is extended to more general configurations such as nonlinear elasticity for heterogeneous and anisotropic materials in general dimension. We further refer the reader to, e.g.,~\cite{Cha03,FL03} with no claim of completeness.

Moreover, in~\cite{BFM00} (see also~\cite{BFM08}), in view of numerical analysis (cf.~see also~\cite{Ng03,Ng05} for another approach), the Francfort-Marigo energy is regularized based on the idea of Ambrosio and Tortorelli (see~\cite{AT1990,AT1992}) by introducing a phase-field $z = z(x,t) \in [0,1]$ which takes $1$ (respectively, $0$) on the completely sound (respectively, cracked) part of the material. Such a regularized functional $\mathcal{F}_\vep(\vec{u},z;\vec{g})$ with an approximation parameter $\vep > 0$ is often called an \emph{Ambrosio-Tortorelli functional}. Moreover, in~\cite{Giacomini}, Giacomini constructed a quasi-static evolution $t \mapsto (\vec{u}_\vep(t), z_\vep(t))$ of crack for the Ambrosio-Tortorelli functional $\mathcal{F}_\vep$ for each $\vep > 0$ and further proved that it converges to a quasi-static evolution $t \mapsto (\vec{u}(t),\Gamma(t))$ for the Francfort-Marigo energy $\mathcal{F}$ as $\vep \to 0_+$ by the use of the $\Gamma$-convergence $\mathcal{F}_\vep \to \mathcal{F}$ (see~\cite{AT1990,AT1992}). 
Besides, the quasi-static evolution minimizing the Ambrosio-Tortorelli functional $\mathcal{F}_\vep$ still inherits the three properties (i)--(iii) mentioned above for the Francfort-Marigo energy, while the monotone growth of the crack set $\Gamma(t)$ as in (i) is replaced by the monotonicity in time of the phase-field $z = z(x,t)$. 

In~\cite{Giacomini}, the construction of the quasi-static evolution $t \mapsto (\vec{u}_\vep(t), z_\vep(t))$ for the Ambrosio-Tortorelli functional is based on the so-called \emph{minimizing movement scheme}. For simplicity, let us restrict ourselves to the following Ambrosio-Tortorelli functional for a scalar displacement field $u$ and a phase-field $ z $:
\[
\mathcal{F}_\vep(u,z;g) = \frac 12 \int_\Omega (\eta_\vep + z^2) \left| \nabla u \right|^2 \d x + \frac \vep 2 \int_\Omega |\nabla z|^2 \, \d x + \frac 1 {2\vep} \int_\Omega (1-z)^2 \, \d x,
\]
where $(u,z) \in H^1(\Omega) \times H^1(\Omega)$ satisfies $ z = 1 $, $u = g$ on $ \ddom $, a part of $\partial \Omega$, and $0 \leq z \leq 1$ a.e.~in $\Omega$, and $0 < \eta_\vep \ll \vep$ is a constant. Let $m \in \N$, $\tau := T/m > 0$ and $t^j := j \tau$ for $j=0,1,\dots,m$. Given $(u^{j-1},z^{j-1}) \in H^1(\Omega) \times H^1(\Omega)$, we first find a minimizer $(u^j,z^j) \in H^1(\Omega) \times H^1(\Omega)$ of $\mathcal F_\vep(u,z;g^j)$, where $g^j $ approximates $ g(t^j)$, subject to the constraint $z \leq z^{j-1}$ a.e.~in $\Omega$ (which corresponds to the irreversibility of crack evolution) and $ z^j = 1 $, $u^j = g^j$ on $ \ddom $. Then a quasi-static evolution $t \mapsto (u_\vep(t),z_\vep(t)) \in H^1(\Omega) \times H^1(\Omega)$ for the Ambrosio-Tortorelli functional $\mathcal{F}_\vep$ can be obtained as a limit of the piecewise constant interpolants of $\{u^j,z^j\}_{j=1,\ldots,m}$ as $m \to +\infty$ (equivalently, $\tau \to 0_+$).

On the other hand, the quasi-static evolution has not yet been fully pursued in view of the Euler-Lagrange equation. The discrete pair $(u^j,z^j)$ may be characterized as a (weak) solution to the following discretized Euler-Lagrange equation:
\begin{alignat*}{4}
-\mathrm{div} \left( (\eta_\vep + (z^j)^2) \nabla u^j \right) &= 0 & \quad &\mbox{ in } \ \Omega,\\
\dind \left( z^j-z^{j-1} \right) - \vep \Delta z^j + \frac 1\vep (z^j-1) + z^j \big| \nabla u^j \big|^2 &\ni 0 &&\mbox{ in } \ \Omega,
\end{alignat*}
whose formal continuous limit reads,
\begin{alignat*}{4}
-\mathrm{div} \left( (\eta_\vep + z^2) \nabla u \right) &= 0 & \quad & \mbox{ in } \ \Omega \times (0,T),\\
\dind \left( \partial_t z \right) - \vep \Delta z + \frac1{\vep} \left( z-1 \right) + z |\nabla u|^2 &\ni 0 &&\mbox{ in } \ \Omega \times (0,T).
\end{alignat*}
In particular, there still seems no result which can guarantee the well-posedness for the system of PDEs mentioned above. Equation \eqref{eq} is derived as a simplified form of the second equation (i.e., a phase-field equation) of the system. It is noteworthy that \eqref{eq} also complies with three properties (i.e., complete irreversibility, unilateral equilibrium of an energy and an energy conservation law) similar to those for the quasi-static evolution of brittle fracture (see Theorem \ref{thm:qua} below for details). In~\cite{AK}, the following equation was comprehensively studied:
$$
\partial_t z + \dind \left( \partial_t z \right) - \Delta z \ni f \quad \mbox{ in } \ \Omega \times (0,T),
$$
which can be seen as a parabolic regularization for \eqref{eq} with $\sigma = 0$ (see~\cite{K-S80,GiSa94,GiGoSa94} for $f \equiv 0$ and also~\cite{KiNe} for a weak formulation). Actually, in~\cite{AK}, some parabolic smoothing effect and energy-dissipation were also observed. On the other hand, due to the parabolic regularization, neither the unilateral equilibrium nor the conservation of an energy hold true. Moreover, such a parabolic regularization has also been applied to phase-field models for brittle fractures (see, e.g.,~\cite{FN0,FN,FKNS,Fr1,Fr2,Ned1,BS,BSS,MR,Mi,KRZ,Ned2}); both the unilateral equilibrium of the energy and the energy conservation law can no longer be realized there. The present paper is concerned with an evolution equation complying with the three properties, that is, complete irreversibility of evolution, unilateral equilibrium of an energy and an energy conservation law. We aim at proving the well-posedness for the initial-boundary value problem and investigating qualitative and asymptotic properties of its solutions. The main results of the present paper may not yet been applicable directly to the fracture mechanics; however, they shed a light to give a better understanding of the quasi-static evolution of fractures developed in literature (e.g.,~\cite{FM,DT,DFT,Cha03,FL03,BFM00,Giacomini}) from a more PDE-based point of view (via the phase-field approximation) with keeping the three intrinsic properties to fractures models.

In order to state main results, in addition to \eqref{V}, we define
\begin{align*}
X &:= \left\{ u \in H^2(\Omega) \colon \partial_\nu u=0 \quad \mbox{on } \, \ndom \right\}
\end{align*}
(see also \S \ref{ssec:fsp} below). Moreover, let $V^*$ and $X^*$ denote the dual spaces of $V$ and $X$, respectively. In what follows, we are concerned with \emph{strong solutions} to \eqref{eq:vi}, \eqref{bcic} defined as follows:

\begin{defi}[Strong solution for \eqref{eq:vi}, \eqref{bcic}]\label{D:sol}
A function $z \in C([0,T];L^2(\Omega))$ is called a \emph{strong solution} on $[0,T]$ to \eqref{eq:vi} (or equivalently, \eqref{eq}) and \eqref{bcic}, if the following (i)--(iv) are all satisfied\/{\rm :}
\begin{enumerate}
\item[(i)] $z \in W^{1,2}(0,T;L^2(\Omega)) \cap L^2(0,T;X \cap V)$;
\item[(ii)] $\partial_t z \leq 0 \ $ and $ \ -\Delta z + \sigma z - f \leq 0$\quad a.e.~in $ \ \Omega \times (0,T)$;
\item[(iii)] $\partial_t z (-\Delta z + \sigma z - f) = 0$\quad a.e.~in $\ \Omega \times (0,T)$;
\item[(iv)] $z(x,0) = z_0(x)$\quad for a.e.~$x \in \Omega$.
\end{enumerate}
\end{defi}

Moreover, we define an operator $B:V \to V^*$ by
\begin{equation}\label{eq:B}
\left\langle Bu, v \right\rangle_V := \int_\Omega \left( \nabla u \cdot \nabla v + uv \right) \d x \quad \mbox{ for } \ u, v \in V.
\end{equation}
Then $B$ is bijective from $V$ to $V^*$, and hence, the inverse mapping $B^{-1} : V^* \to V$ is well defined. Throughout this paper, we assume that
\begin{equation}\label{eq:ass}
B^{-1} w \in H^2(\Omega) \cap V \quad \mbox{ for } \ w \in L^2(\Omega),
\end{equation}
which is nothing but the $L^2$-\emph{elliptic regularity} (i.e., $u \in H^2(\Omega)$ if $-\Delta u + u\in L^2(\Omega)$ along with an appropriate boundary condition; see, e.g.,~\cite[\S 9.6]{HB}) and always holds true for smooth domains along with either Dirichlet or Neumann boundary condition; on the other hand, it is rather delicate for mixed boundary conditions as well as for non-smooth domains. Indeed, according to~\cite{LM}, there are some elliptic operators equipped with certain mixed boundary conditions for which the elliptic regularity condition \eqref{eq:ass} does not hold true (see also~\cite{Grisvard, MS} and the references therein). The mixed boundary condition \eqref{bcic} is motivated from the fracture model mentioned above (see, e.g.,~\cite{Giacomini}); therefore, we assumed \eqref{eq:ass} to guarantee the $H^2$-regularity for the domain of the Laplacian equipped with \eqref{bcic}.

Main results of the present paper will be stated below. First of all, we exhibit that
\begin{thm}[Well-posedness]\label{thm:main}
Assume that \eqref{eq:ass} is satisfied. Let $z_0 \in X \cap V$, $f \in L^2 (0,T;L^2(\Omega)) \cap W^{1,2} (0,T;V^*)$ and let $\sigma \geq 0$ be a constant. Assume that
\renewcommand{\labelenumi}{(\roman{enumi})}
\begin{enumerate}
\item $\sigma > 0 \ $ if $\ \H^{N-1}(\ddom)=0${\rm ;}
\item there exists $\hat{f} \in L^2(\Omega) \,$ such that $\, f(x,t) \geq \hat{f}(x) \,$ for a.e.~$(x,t) \in \Omega \times (0,T)${\rm ;}
\item $-\Delta z_0 + \sigma z_0 - f(\cdot,0) \leq 0 \,$ in $\, V^*$.
\end{enumerate}
Then the initial-boundary value problem \eqref{eq:vi} {\rm(}or equivalently, \eqref{eq}{\rm )} and \eqref{bcic} admits a unique strong solution $z = z(x,t)$ such that
\begin{equation}\label{eq:A}
z \in W^{1,2} (0,T;V).
\end{equation}
Moreover, for each $i=1,2$, let $z^i_0 = z^i_0(x)$ and $f^i = f^i(x,t)$ be given functions satisfying all the assumptions mentioned above and let $z^i = z^i(x,t)$ denote the strong solution to \eqref{eq:vi} {\rm(}equivalently, \eqref{eq}{\rm )} and \eqref{bcic} with $z_0=z^i_0$ and $f=f^i$. Then there exists a constant $C>0$ independent of $T$ such that
\begin{align}
\MoveEqLeft{
\sup_{t \in [0,T]} \left\| z^1(t)-z^2(t) \right\|_V
}\nonumber\\
&\leq C \, \Big( \left\| z^1_0-z^2_0 \right\|_V + \sup_{t \in [0,T]} \left\| f^1(t)-f^2(t) \right\|_{V^*} + \left\| \partial_t f^1 - \partial_t f^2 \right\|_{L^1(0,T;V^*)} \Big).\label{conti-dep}
\end{align}
\end{thm}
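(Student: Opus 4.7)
The plan is to construct a strong solution via a minimizing movement scheme tailored to the irreversibility constraint, and to obtain uniqueness as a corollary of the continuous dependence estimate \eqref{conti-dep}, which is itself derived from the monotonicity of $\dind$. Fix $m \in \N$, set $\tau := T/m$, $t^j := j\tau$, and let $f^j$ be a time-averaged approximant of $f$ on $(t^{j-1}, t^j)$. Starting from $z^0 := z_0$, iteratively let $z^j$ be the unique minimizer of $\varphi(w) - (f^j, w)_H$ over the closed convex set $\{w \in V \colon w \leq z^{j-1} \text{ a.e.}\}$ (unique since $\varphi$ is strictly convex and coercive on $V$, thanks to (i), via either $\sigma > 0$ or Poincar\'e on $V$ when $\H^{N-1}(\ddom) > 0$). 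The Euler--Lagrange condition produces $\lambda^j \geq 0$ with $\lambda^j(z^j - z^{j-1}) = 0$ and $-\Delta z^j + \sigma z^j - f^j = -\lambda^j$, equivalently the discrete inclusion
\begin{equation*}
\dind(z^j - z^{j-1}) - \Delta z^j + \sigma z^j \ni f^j \quad \text{in } V^*,
\end{equation*}
which is the natural time-discretization of \eqref{eq}.

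Three families of a priori estimates, uniform in $\tau$, drive the limit passage. (a) Testing the discrete equation with $z^j - z^{j-1}$ (which is $\leq 0$) and summing gives the natural energy estimate, controlling $\varphi(z_\tau)$ and $\sum_j \tau\|(z^j - z^{j-1})/\tau\|_H^2$. (b) Differencing two consecutive discrete equations, dividing by $\tau$, and testing with the second-order difference $(z^j - z^{j-1}) - (z^{j-1} - z^{j-2})$ makes the $\dind$ contributions drop out by monotonicity; combined with assumption (iii) --- which makes $z^0$ almost a critical point of $\varphi - (f^1, \cdot)_H$ on $\{w \leq z^0\}$ and so controls the base step $\|(z^1 - z^0)/\tau\|_V$ --- this yields a uniform bound on $\partial_t z_\tau$ in $L^2(0,T;V)$. (c) Assumption (ii) furnishes a stationary lower barrier $\underline z \in V$ solving $-\Delta \underline z + \sigma \underline z = \hat f$, so $z_\tau \geq \underline z$ uniformly. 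The elliptic regularity \eqref{eq:ass} applied to the pointwise relation $-\Delta z^j + \sigma z^j = f^j - \lambda^j \in L^2$ upgrades the control to a uniform $X$-bound. Aubin--Lions compactness then supplies a subsequence whose piecewise constant/linear interpolants converge to a limit $z \in W^{1,2}(0,T;V) \cap L^2(0,T;X \cap V)$ satisfying $\partial_t z \leq 0$ and $-\Delta z + \sigma z - f \leq 0$ almost everywhere (both preserved in the limit). The complementarity condition (iii) of Definition~\ref{D:sol} is recovered by passing to the limit in the discrete energy identity associated with the obstacle problem.

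For continuous dependence, let $z^i$ be two strong solutions with data $(z_0^i, f^i)$, set $w := z^1 - z^2$, and observe $\eta^i := f^i + \Delta z^i - \sigma z^i \in \dind(\partial_t z^i)$ by conditions (ii) and (iii) of Definition~\ref{D:sol}. Monotonicity of $\dind$ gives $(\eta^1 - \eta^2)(\partial_t z^1 - \partial_t z^2) \geq 0$ pointwise; integrating over $\Omega$ and using the boundary conditions,
\begin{equation*}
\tfrac{1}{2}\tfrac{\d}{\d t}\Big( \|\nabla w\|_H^2 + \sigma\|w\|_H^2 \Big) \leq \int_\Omega (f^1 - f^2)\,\partial_t w\,\d x.
\end{equation*}
Integrating on $[0,t]$ and integrating the right-hand side by parts in time (legitimate since $f^i \in W^{1,2}(0,T;V^*)$ and $w \in C([0,T];V)$ by \eqref{eq:A}),
\begin{equation*}
\int_0^t \!\! \int_\Omega (f^1 - f^2)\,\partial_t w\,\d x\,\d s = \langle (f^1 - f^2)(t), w(t) \rangle - \langle (f^1 - f^2)(0), w_0 \rangle - \int_0^t \langle \partial_t(f^1 - f^2), w \rangle\,\d s,
\end{equation*}
all pairings being $V^* \times V$. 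Young's inequality absorbs $\|w\|_V^2$-contributions into the left-hand side (using Poincar\'e in $V$ when $\sigma = 0$, permitted by (i)), and taking a supremum in $t$ delivers \eqref{conti-dep}; uniqueness follows by equal data. The most delicate step is (b) above: obtaining a uniform $L^2(0,T;V)$-bound on $\partial_t z_\tau$, needed both for \eqref{eq:A} and to justify the above integration by parts. It depends crucially on the favorable sign structure provided by monotonicity of $\dind$ together with the compatibility assumption (iii); without (iii) the discrete time derivative at $j=1$ could blow up like $1/\tau$, destroying the regularity.
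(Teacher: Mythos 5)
Your overall architecture (minimizing movement scheme via obstacle problems, a priori estimates, Aubin--Lions compactness, monotonicity-based continuous dependence) matches the paper's, and the continuous dependence argument is essentially correct. However, your step (b) --- the a priori estimate for $\partial_t z_\tau$ in $L^2(0,T;V)$ --- contains a genuine flaw.

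You propose to test the differenced equation
\begin{equation*}
\frac{\eta_k-\eta_{k-1}}{\tau} - \Delta a_k + \sigma a_k = \frac{f_k-f_{k-1}}{\tau}, \qquad a_k := \frac{z_k-z_{k-1}}{\tau},
\end{equation*}
with the second-order difference $a_k - a_{k-1}$, so that the $\dind$ term is dispatched by generic monotonicity. This makes the elliptic term telescope, $\int (-\Delta a_k + \sigma a_k)(a_k-a_{k-1}) \geq \tfrac12(b_k - b_{k-1})$ with $b_k := \|\nabla a_k\|_{L^2}^2 + \sigma\|a_k\|_{L^2}^2$, while the right-hand side is controlled via Young's inequality by $d_k^2 := \|(f_k-f_{k-1})/\tau\|_{V^*}^2$ plus a piece absorbed into $\|a_k-a_{k-1}\|_V^2$. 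Summing yields $b_m \leq b_1 + C\sum_{k\geq 2} d_k^2$. But Proposition~\ref{pro:inter} only gives $\tau\sum_k d_k^2 \leq 4\|\partial_t f\|_{L^2(0,T;V^*)}^2$, so $\sum_k d_k^2 = O(\tau^{-1})$; the resulting bound on $b_m$, hence on $\|\partial_t z_\tau\|_{L^2(0,T;V)}^2 = \tau\sum_k b_k$, blows up as $\tau\to 0_+$. Abel summation does not rescue this, since it introduces uncontrolled second differences of $f$. Your subsidiary claim that assumption (iii) makes $\|(z^1-z^0)/\tau\|_V$ uniformly bounded is also false under the stated hypotheses: it only gives $\|a_1\|_V \leq C d_1$, and $d_1$ is square-summable against $\tau$ but need not be $O(1)$.

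The paper's Lemma~\ref{lem:ztau} avoids both difficulties by testing instead with the \emph{first-order} difference $a_k$, and crucially not by generic monotonicity of $\dind$, but by the finer sign structure specific to the indicator constraint: from the complementarity $\eta_k(z_k-z_{k-1})=0$ at step $k$, together with $\eta_{k-1} \geq 0$ (range of $\dind$) and $a_k \leq 0$ (irreversibility), one has $\int_\Omega(\eta_k - \eta_{k-1})\,a_k\,\d x = -\int_\Omega \eta_{k-1}\,a_k\,\d x \geq 0$. This yields the pointwise-in-$k$ estimate $\|a_k\|_V \leq C\,d_k$ for every $k=1,\dots,m$ (with $k=1$ handled directly by $\eta_0 \geq 0$ from assumption (iii), no separate base-step estimate needed), and summing $\tau\sum_k\|a_k\|_V^2 \leq C\tau\sum_k d_k^2 \leq C\|\partial_t f\|_{L^2(0,T;V^*)}^2$ gives the desired uniform bound. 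You should replace your step (b) by this argument; the rest of your sketch then goes through essentially as in the paper.

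Two smaller remarks. First, to justify that the Lagrange multiplier $\lambda^j$ lies in $L^2(\Omega)$ with bounds uniform in $\tau$ (so that elliptic regularity delivers the $X$-bound), you need the Lewy--Stampacchia inequality $f_k \wedge(-\Delta z_{k-1}+\sigma z_{k-1}) \leq -\Delta z_k + \sigma z_k \leq f_k$; a lower barrier on $z_\tau$ alone does not bound $\lambda^j$. Second, for the continuous dependence estimate, the justification of the integration-by-parts in time requires a Leibniz rule for $t\mapsto\langle f(t),w(t)\rangle_V$ when $\partial_t w \in L^2(0,T;H)$ but not necessarily $L^2(0,T;V)$; this needs a mollification argument as in the paper's Proposition~\ref{pro:leibniz}, which you should invoke rather than asserting the step is ``legitimate''.
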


The next theorem is concerned with the three intrinsic properties of \eqref{eq:vi} (equivalently, \eqref{eq}). Here we define an energy functional $\mathcal{E} : V \times [0,T]  \to \R$ by
\begin{align}
\mathcal E(w,t) &= \varphi(w) - \left\langle f(t),w \right\rangle_V \nonumber\\
&= \frac{1}{2} \int_{\Omega} |\nabla w(x)| \, \d x + \frac{\sigma}{2} \int_{\Omega} |w(x)|^2 \, \d x - \left\langle f(t), w \right\rangle_V \quad \mbox{ for } \ w \in V \ \mbox{ and } \ t \in [0,T]. \label{eq:e}
\end{align}

\begin{thm}[Irreversibility, unilateral equilibrium and conservation of the energy]\label{thm:qua}
Assume that \eqref{eq:ass} is satisfied. Let $\sigma \geq 0$, $f \in L^2(0,T;L^2(\Omega)) \cap W^{1,2}(0,T;V^*)$ and $z_0 \in X \cap V$ satisfy all the assumptions {\rm (i)--(iii)} of Theorem {\rm \ref{thm:main}}. Then the unique strong solution $z = z(x,t)$ to \eqref{eq:vi} {\rm(}or equivalently, \eqref{eq}{\rm )} and \eqref{bcic} enjoys the following three conditions\/{\rm :}
\renewcommand{\labelenumi}{(\roman{enumi})}
\begin{enumerate}
\item {\rm (}{\bf Complete irreversibility of evolution}{\rm )} It holds that
\[
\partial_t z \leq 0 \quad \mbox{ a.e.~in } \ \Omega \times (0,T);
\]
\item {\rm (}{\bf Unilateral equilibrium of the energy}{\rm )} for all $t \in [0,T]${\rm ,} it holds that
\[
\E ( z(t),t ) \leq \E ( v,t )
\]
for any $v \in V$ satisfying $v \leq z(t)$ a.e.~in $\, \Omega${\rm ;}
\item {\rm (}{\bf Energy conservation law}{\rm )} the function $t \mapsto \E(z(t),t)$ is absolutely continuous on $[0,T]$ and the energy balance holds in the following sense\/{\rm :}
\[
\E ( z(t),t ) - \E ( z(s),s ) = -\int^t_s \left\langle \partial_t f(r), z(r) \right\rangle_V \d r
\]
for all $t, s \in [0,T]$. In particular, if $f$ is stationary in time, that is, $\partial_t f \equiv 0$, then no evolution occurs, i.e., $z(t) \equiv z_0$.
\end{enumerate}
\end{thm}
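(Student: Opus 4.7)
The plan is to derive the three assertions directly from the strong solution framework established in Theorem \ref{thm:main}, exploiting convexity of $\varphi$ together with the complementarity condition of Definition \ref{D:sol}. Assertion (i) is immediate, as $\partial_t z \leq 0$ a.e.~in $\Omega \times (0,T)$ is built into the very definition of a strong solution.

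For the unilateral equilibrium (ii), I would fix $t \in [0,T]$ and a competitor $v \in V$ with $v \leq z(t)$ a.e.~in $\Omega$. Since $\varphi$ is a continuous convex quadratic form on $V$ with derivative $\partial\varphi(w) = -\Delta w + \sigma w$ (interpreted in $V^*$; belonging to $L^2(\Omega)$ whenever $w \in X \cap V$), the convexity inequality yields
\[
\E(v,t) - \E(z(t),t) \geq \left\langle -\Delta z(t) + \sigma z(t) - f(t),\, v - z(t) \right\rangle_V.
\]
The regularity $z \in W^{1,2}(0,T;V) \cap L^2(0,T;X \cap V)$ together with $f \in W^{1,2}(0,T;V^*)$ shows that $t \mapsto -\Delta z(t) + \sigma z(t) - f(t)$ lies in $C([0,T];V^*)$, so the inequality $-\Delta z + \sigma z - f \leq 0$ from Definition \ref{D:sol}(ii), valid a.e.~in time, extends by continuity to every $t \in [0,T]$ as a sign condition in $V^*$. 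Pairing with the nonpositive element $v - z(t) \in V$ then forces the right-hand side to be nonnegative, proving (ii).

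The energy conservation (iii) is the main content. The strategy is a chain rule for $t \mapsto \E(z(t),t)$. Since $z \in W^{1,2}(0,T;V)$ by \eqref{eq:A} and $\varphi$ is a continuous quadratic form on $V$, one has $\varphi \circ z \in W^{1,1}(0,T)$ with
\[
\frac{\d}{\d t}\varphi(z(t)) = \left\langle -\Delta z(t) + \sigma z(t),\, \partial_t z(t) \right\rangle_V \quad \mbox{for a.e.~} t \in (0,T).
\]
At a.e.~$t$, one in fact has $z(t) \in X \cap V$ and $\partial_t z(t) \in V$, so the pairing reduces to an integral over $\Omega$ through integration by parts, justified by the boundary conditions encoded in $X$ and $V$. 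Combined with the product rule $\frac{\d}{\d t}\left\langle f(t),z(t) \right\rangle_V = \left\langle \partial_t f(t), z(t) \right\rangle_V + \left\langle f(t), \partial_t z(t) \right\rangle_V$, which holds because $f \in W^{1,2}(0,T;V^*)$ and $z \in W^{1,2}(0,T;V)$, this gives
\[
\frac{\d}{\d t}\E(z(t),t) = \left\langle -\Delta z(t) + \sigma z(t) - f(t),\, \partial_t z(t) \right\rangle_V - \left\langle \partial_t f(t), z(t) \right\rangle_V.
\]
The complementarity $\partial_t z \, (-\Delta z + \sigma z - f) = 0$ a.e.~in $\Omega \times (0,T)$ from Definition \ref{D:sol}(iii) kills the first term on the right-hand side, and integration on $[s,t]$ yields both the absolute continuity of $t \mapsto \E(z(t),t)$ and the energy balance asserted in (iii).

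Finally, if $\partial_t f \equiv 0$ the balance in (iii) gives $\E(z(t),t) = \E(z_0,0)$ for every $t \in [0,T]$, while stationarity of $f$ means $\E(v,t) = \E(v,0)$ for all $v \in V$. Applying (ii) at time $0$ and using the irreversibility $z(t) \leq z_0$, one sees that both $z_0$ and $z(t)$ minimize $v \mapsto \E(v,0)$ on the closed convex set $\{v \in V : v \leq z_0\}$; by assumption (i) of Theorem \ref{thm:main}, $\varphi$ is strictly convex on $V$ (via $\sigma > 0$ or, when $\H^{N-1}(\ddom) > 0$, via the Poincar\'e inequality), so this minimizer is unique and $z(t) \equiv z_0$. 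The principal technical obstacle I anticipate is the rigorous execution of the chain rule combined with the $L^2(\Omega)$-pointwise cancellation via the complementarity condition, since this requires simultaneously the full regularity $z \in W^{1,2}(0,T;V) \cap L^2(0,T;X \cap V)$ furnished by Theorem \ref{thm:main} and a careful integration by parts compatible with the mixed Dirichlet/Neumann boundary conditions; once this is in hand the remaining manipulations are essentially algebraic.
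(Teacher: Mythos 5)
Your proof is correct and follows the same overall strategy as the paper: (i) is immediate from Definition \ref{D:sol}(ii), (ii) comes from convexity of $\varphi$ together with the sign condition $-\Delta z+\sigma z-f\leq 0$, and (iii) combines the chain rule for $\varphi(z(\cdot))$ with the Leibniz rule for $\langle f(\cdot),z(\cdot)\rangle_V$ and the complementarity from Definition \ref{D:sol}(iii). Two small technical choices you make are worth pointing out as genuine (minor) improvements over the paper's exposition. For (ii), you extend the a.e.-in-time sign condition to every $t\in[0,T]$ by noting that $t\mapsto -\Delta z(t)+\sigma z(t)-f(t)$ lies in $C([0,T];V^*)$ and the nonpositive cone of $V^*$ is closed; this handles $t=0$ uniformly, whereas the paper passes to a left limit $t_n\nearrow t_0$ using the monotonicity $z(t_0)\leq z(t_n)$, which tacitly excludes $t_0=0$ (covered there only by hypothesis (iii) of Theorem \ref{thm:main}). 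For the final no-evolution assertion, you explicitly argue via strict convexity of $\E(\cdot,0)$ on the constraint set $\{v\leq z_0\}$ (strictness coming from $\sigma>0$ or Poincar\'e's inequality, guaranteed by hypothesis (i) of Theorem \ref{thm:main}); the paper asserts this consequence without supplying the argument.
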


Now, we move on to global dynamics of \emph{global-in-time strong solutions}, which are defined below, for the initial-boundary value problem \eqref{eq:vi} (or equivalently, \eqref{eq}) and \eqref{bcic}.

\begin{defi}[Global-in-time strong solution]
A function $z \in C([0,\infty);L^2(\Omega))$ is called a \emph{global-in-time strong solution} to the initial-boundary value problem \eqref{eq:vi} (or equivalently, \eqref{eq}) and \eqref{bcic}, if the following (i) and (ii) are satisfied\/{\rm :}
\begin{enumerate}
\item $z \in W^{1,2}_{\rm loc} ([0,\infty);L^2(\Omega)) \cap L^2_{\rm loc} ([0,\infty);X \cap V)$;
\item for each $T>0$, the restriction $z|_{[0,T]}$ of $ z $ onto $ [0,T] $ is a strong solution on $[0,T]$ to \eqref{eq:vi} and \eqref{bcic}.
\end{enumerate}
\end{defi}

Then we have

\begin{thm}[Global well-posedness]\label{thm:global}
Assume that \eqref{eq:ass} is satisfied. Let $\sigma \geq 0$, $f \in W^{1,2}_{\rm loc}([0,\infty);V^*) \cap L^2_{\rm loc}([0,\infty);L^2(\Omega))$ and $z_0 \in X \cap V$ satisfy all the assumptions {\rm (i)--(iii)} of Theorem {\rm \ref{thm:main}} for any $T > 0$. Then there exists a global-in-time strong solution $z \in C([0,\infty);L^2(\Omega))$ to \eqref{eq:vi} {\rm (}or equivalently \eqref{eq}{\rm )} and \eqref{bcic} such that
\[
z \in W^{1,2}_\loc ([0,\infty);V).
\]

In addition, if $\partial_t f \in L^2(0,\infty;V^*)$, then it holds that $\partial_t z \in L^2(0,\infty;V)$ and 
\begin{equation}\label{dtz-bdd}
\left\| \partial_t z \right\|_{L^2(0,\infty;V)} \leq C \left\| \partial_t f \right\|_{L^2(0,\infty;V^*)}
\end{equation}
for some constant $C > 0$.

Moreover, let $f^j = f^j(x,t)$ and $z_0^j = z_0^j(x)$ satisfy all the assumptions above and let $z^j = z^j(x,t)$ be the global-in-time strong solutions to \eqref{eq:vi} {\rm (}or equivalently \eqref{eq}{\rm )} and \eqref{bcic} with $f = f^j$ and $z_0 = z_0^j$ for $j=1,2$. Then there exists a constant $C > 0$ such that \eqref{conti-dep} holds for each $T > 0$.
\end{thm}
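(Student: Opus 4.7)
The plan is to reduce Theorem~\ref{thm:global} to Theorem~\ref{thm:main} by gluing finite-horizon solutions, together with a $T$-independent estimate on $\partial_t z$ in $L^2(0,T;V)$ that I would extract from the time-discrete minimizing movement scheme underlying the proof of Theorem~\ref{thm:main}.

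\textbf{Construction and continuous dependence.} For each $T>0$, Theorem~\ref{thm:main} produces a unique strong solution $z_T \in W^{1,2}(0,T;V) \cap L^2(0,T;X\cap V)$ on $[0,T]$. For $0 < T_1 < T_2$, the restriction $z_{T_2}|_{[0,T_1]}$ and $z_{T_1}$ both solve the same initial-boundary value problem on $[0,T_1]$, so uniqueness forces them to coincide. These solutions glue into a single $z:\Omega\times[0,\infty)\to\R$ which is a global-in-time strong solution and lies in $W^{1,2}_{\loc}([0,\infty);V)$. Given two such global solutions $z^1,z^2$, the estimate~\eqref{conti-dep} on $[0,T]$ is inherited verbatim from Theorem~\ref{thm:main}, with the same constant $C$, which depends on neither $T$ nor the data.

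\textbf{The global bound~\eqref{dtz-bdd}.} This is the only nontrivial part. I revisit the minimizing movement scheme on $[0,T]$: with step $\tau = T/m$, $t^j = j\tau$, and $f^j$ the mean of $f$ on $(t^{j-1},t^j)$, the iterates $\{z^j\}$ satisfy
\[
Az^j + g^j = f^j \ \mbox{ in } V^*,\qquad g^j\geq 0,\qquad \langle g^j, z^j - z^{j-1}\rangle_V = 0,
\]
where $A:=-\Delta+\sigma$ and $g^j\in L^2(\Omega)$ by the elliptic regularity~\eqref{eq:ass}. Subtracting two consecutive such identities and testing with $z^{j+1}-z^j\in V$ gives
\[
\langle A(z^{j+1}-z^j),z^{j+1}-z^j\rangle_V + \langle g^{j+1}-g^j, z^{j+1}-z^j\rangle_V = \langle f^{j+1}-f^j, z^{j+1}-z^j\rangle_V.
\]
The first term dominates $c\|z^{j+1}-z^j\|_V^2$ by coercivity of $A$ on $V$ (assumption~(i) of Theorem~\ref{thm:main}). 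The cross term is non-negative: complementarity at step $j+1$ gives $\langle g^{j+1},z^{j+1}-z^j\rangle_V=0$, while $-\langle g^j,z^{j+1}-z^j\rangle_V\geq 0$ since $g^j\geq 0$ and $z^{j+1}-z^j\leq 0$ a.e.\ (discrete complete irreversibility). Dividing by $\tau^2$, summing in $j$ and using Cauchy--Schwarz yields
\[
\int_0^T \|\partial_t z_\tau\|_V^2\,\d t \leq C\int_0^T \|\partial_t f_\tau\|_{V^*}^2\,\d t,
\]
with $C$ depending only on the coercivity constant of $A$. Passing $\tau\to 0_+$ by weak lower semicontinuity and then $T\to\infty$ produces~\eqref{dtz-bdd}.

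\textbf{Principal difficulty.} The whole load of the argument sits on the last step. Two ingredients need care: (a) identifying the discrete multiplier $g^j$ as an element of $L^2(\Omega)$ with $g^j\geq 0$ a.e., which is exactly where the $L^2$-elliptic regularity~\eqref{eq:ass} enters; and (b) checking that the coercivity constant of $A$ on $V$, which governs the final $C$, depends on neither $\tau$ nor $T$---this in turn relies on assumption~(i) of Theorem~\ref{thm:main}, providing either a Poincaré inequality (when $\H^{N-1}(\ddom)>0$) or a uniform mass term (when $\sigma>0$). Once these are in place, the discrete estimate passes cleanly to the limit and remains valid on $(0,\infty)$.
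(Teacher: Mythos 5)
Your proposal is correct and follows essentially the same route as the paper: the global solution is obtained by gluing the finite-horizon solutions from Theorem~\ref{thm:main} via uniqueness, the continuous dependence estimate~\eqref{conti-dep} is inherited since its constant is $T$-independent, and~\eqref{dtz-bdd} comes from the $T$-independent discrete estimate on $\partial_t z_\tau$. The only difference is that the paper simply cites the already-proved estimate~\eqref{eq:ztau} from Lemma~\ref{lem:ztau}, whereas you re-derive it inline (subtracting consecutive discrete equations, exploiting $g^j\geq 0$, $z^{j+1}-z^j\leq 0$, complementarity, and coercivity of $A$), which is the same computation.
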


The following theorem then concerns the long-time behavior of global-in-time strong solutions for \eqref{eq:vi}, \eqref{bcic}.

\begin{thm}[Long-time dynamics of global-in-time strong solutions]\label{thm:asympt}
Assume that \eqref{eq:ass} is satisfied. Let $\sigma \geq 0$, $f \in W^{1,2}_{\rm loc}([0,\infty);V^*) \cap L^2_{\rm loc}([0,\infty);L^2(\Omega))$ and $z_0 \in X \cap V$ satisfy the assumptions {\rm (i)} and {\rm (iii)} of Theorem {\rm \ref{thm:main}} and
\begin{enumerate}
\item[$(\mathrm{ii})_{T=\infty}$] there exists $\hat{f} \in L^2(\Omega) $ such that $ f(x,t) \geq \hat{f}(x) $ for a.e.~$(x,t) \in \Omega \times (0,\infty)$.
\end{enumerate}
In addition, assume that
\begin{enumerate}
\setcounter{enumi}{3}
\item $f \in L^\infty(0,\infty;L^2(\Omega))$, $\partial_t f \in L^2(0,\infty;V^*)${\rm ;}
\item there exists a function $f_\infty \in L^2(\Omega)$ such that $f-f_\infty \in L^2(0,\infty;V^*)$.
\end{enumerate}

Let $z = z(x,t)$ be the global-in-time strong solution to \eqref{eq:vi} {\rm (}or equivalently \eqref{eq}{\rm )} and \eqref{bcic}. Then there exists a function $z_\infty \in X \cap V$ such that
\[
z(t) \to z_\infty \quad \mbox{ strongly in } \ V \quad \mbox{ as } \ t \to \infty
\]
and
\[
z_\infty \leq z_0, \quad -\Delta z_\infty + \sigma z_\infty - f_\infty \leq 0 \quad \mbox{ a.e.~in } \ \Omega.
\]
In addition, if it is also satisfied that
\begin{equation}\label{eq:xi}
f_\infty \leq f \quad \mbox{ a.e.~in } \ \Omega \times (0,\infty),
\end{equation}
then the stationary limit $z_\infty$ is the unique solution to the variational inequality,
\begin{equation*}
\left\lbrace
\begin{alignedat}{4}
&z_\infty \leq z_0, \quad -\Delta z_\infty + \sigma z_\infty - f_\infty \leq 0 & \quad &\mbox{ in } \ \Omega,\\
&(z_\infty-z_0) \left( -\Delta z_\infty + \sigma z_\infty - f_\infty \right) = 0 &&\mbox{ in } \ \Omega,
\end{alignedat}
\right.
\end{equation*}
which is equivalently rewritten as the inclusion,
\begin{equation}\label{eq:viinfty}
\dind \left( z_\infty - z_0 \right) - \Delta z_\infty + \sigma z_\infty \ni f_\infty \ \mbox{ in } \ \Omega,
\end{equation}
equipped with the boundary condition,
\begin{equation}\label{bc}
\left\lbrace
\begin{alignedat}{4}
&z_\infty = 0 & \quad &\mbox{ on } \ \ddom,\\
&\partial_\nu z_\infty = 0 &&\mbox{ on } \ \ndom.
\end{alignedat}
\right.
\end{equation}
\end{thm}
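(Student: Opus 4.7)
The plan has two halves: first, establishing strong $V$-convergence $z(t)\to z_\infty$ from the energy structure together with irreversibility and assumption (v); second, identifying $z_\infty$ under \eqref{eq:xi} via a discrete comparison with the stationary obstacle problem. For the convergence, I would begin from the identity $\frac{d}{dt}\varphi(z(t))=\langle f(t),\partial_t z(t)\rangle_V$ (a direct consequence of the complementarity $\partial_t z\,(-\Delta z+\sigma z-f)=0$), from which $f\ge\hat f$ and $\partial_t z\le 0$ yield the monotonicity
\[
\frac{d}{dt}\bigl[\varphi(z(t))-\langle\hat f,z(t)\rangle_V\bigr]=\langle f-\hat f,\partial_t z\rangle_V\le 0.
\]
Coercivity of $\varphi$ on $V$ (guaranteed by (i)) then gives a uniform lower bound, so $\varphi(z(t))-\langle\hat f,z(t)\rangle_V$ converges and $\sup_t\|z(t)\|_V<\infty$. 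Pointwise monotone decrease $z(x,t)\downarrow z_\infty(x)$, combined with the uniform $L^2$-bound and dominated convergence (dominator $|z_0|+|z_\infty|$), yields $z(t)\to z_\infty$ strongly in $L^2$ and $z(t)\wto z_\infty$ in $V$, so $\varphi(z(t))\to L\ge\varphi(z_\infty)$. To pin down $L=\varphi(z_\infty)$, I would integrate the pointwise inequality $(-\Delta z+\sigma z-f)(z_\infty-z)\ge 0$ (whose factors are both $\le 0$ by monotonicity $z_\infty\le z$) and pass to the limit along a subsequence $t_n$ along which $f(t_n)\to f_\infty$ in $V^*$ (which exists by (v)), obtaining $\limsup\|\nabla z(t_n)\|^2\le\|\nabla z_\infty\|^2$; weak lower semicontinuity and the full-net convergence of $\varphi(z(t))$ together force $L=\varphi(z_\infty)$, hence strong $V$-convergence. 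The asserted inequalities $z_\infty\le z_0$ and $-\Delta z_\infty+\sigma z_\infty-f_\infty\le 0$ follow by passing to the limit in the corresponding properties of $z(t)$, and $X$-regularity follows from \eqref{eq:ass}.

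For the identification under \eqref{eq:xi}, let $w^*\in X\cap V$ denote the unique minimizer of $\mathcal E_\infty(v):=\varphi(v)-\langle f_\infty,v\rangle_V$ over $\{v\in V:v\le z_0\}$, which solves \eqref{eq:viinfty}--\eqref{bc} by standard obstacle-problem theory. The crux is a discrete comparison in the minimizing-movement scheme underlying Theorem \ref{thm:main}: with $z^j$ the minimizer of $\mathcal E(\cdot,t^j)$ over $\{v\le z^{j-1}\}$, I claim $w^*\le z^j$ by induction on $j$. Setting $M=\max(z^j,w^*)\le z^{j-1}$ and $m=\min(z^j,w^*)\le z_0$, the quadratic identity $\mathcal E(M,t^j)+\mathcal E(m,t^j)=\mathcal E(z^j,t^j)+\mathcal E(w^*,t^j)$ combined with minimality of $z^j$ yields
\[
0\le \mathcal E(M,t^j)-\mathcal E(z^j,t^j)=\mathcal E(w^*,t^j)-\mathcal E(m,t^j)=[\mathcal E_\infty(w^*)-\mathcal E_\infty(m)]-\langle f(t^j)-f_\infty,w^*-m\rangle_V,
\]
whereas the right-hand side is $\le 0$ because $\mathcal E_\infty(w^*)\le\mathcal E_\infty(m)$ (minimality of $w^*$, as $m\le z_0$) and $\langle f(t^j)-f_\infty,w^*-m\rangle_V\ge 0$ (since $w^*-m=(w^*-z^j)_+\ge 0$ and \eqref{eq:xi}). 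All inequalities collapse to equalities, and strict convexity of $\mathcal E_\infty$ forces $m=w^*$, i.e., $w^*\le z^j$. Letting $\tau\to 0$ gives $w^*\le z(t)$ for all $t$, hence $w^*\le z_\infty$; applying the unilateral equilibrium of Theorem \ref{thm:qua}(ii) with test $v=w^*\le z_\infty\le z(t)$ and passing to the limit along $t_n$ gives $\mathcal E_\infty(z_\infty)\le\mathcal E_\infty(w^*)$, while minimality of $w^*$ (using $z_\infty\le z_0$) gives the reverse. Strict convexity then forces $z_\infty=w^*$.

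The main obstacle is the upgrade from weak to strong $V$-convergence in the first half: the monotonicity of $\varphi(z(t))-\langle\hat f,z(t)\rangle_V$ is available along the full net, whereas $f(t_n)\to f_\infty$ in $V^*$ is available only along a subsequence, and it is precisely the pointwise obstacle inequality together with the a.e.\ relation $z_\infty\le z$ that bridges these two modes. The discrete comparison is technically intricate but self-contained once the quadratic splitting is in place, and $X$-regularity via \eqref{eq:ass} is routine.
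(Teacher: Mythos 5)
Your route is genuinely different from the paper's in both halves, and most of it is sound, but there is a real gap in the first half: you do not establish $z_\infty \in X$, and without it the a.e.\ inequality $-\Delta z_\infty + \sigma z_\infty - f_\infty \le 0$ asserted in the theorem is not even well-posed. Your energy-monotonicity argument (decrease of $\varphi(z(t)) - \langle \hat f, z(t)\rangle_V$, pointwise monotone decrease of $z(x,t)$, dominated convergence, weak $V$-convergence upgraded to strong by the Mosco/Radon--Riesz argument along a subsequence $t_n$ with $f(t_n)\to f_\infty$) lives entirely at the $V$-level. Passing to the limit in $-\Delta z(t_n)+\sigma z(t_n)-f(t_n)\le 0$ with only $V$-convergence of $z(t_n)$ yields the inequality in $V^*$ (distributionally), not a.e., and the remark that ``$X$-regularity follows from \eqref{eq:ass}'' is circular: \eqref{eq:ass} upgrades regularity only once you already know $-\Delta z_\infty + \sigma z_\infty\in L^2(\Omega)$, which is precisely what you have not shown. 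The paper closes this by the Lewy--Stampacchia estimate: it shows the set $J_2$ of times $s$ at which $\|-\Delta z(s)+\sigma z(s)\|_{L^2}$ obeys a uniform bound (coming from \eqref{eq:ls}, propagated through the discrete scheme as in Lemma \ref{lem:est2}) has full measure, then chooses $s_n\in J_2$ so that $(z(s_n))$ is bounded in $X$ and $z_\infty$ is obtained as a weak $X$-limit. You need this (or an equivalent device) inserted into your argument; once $z(t_n)\rightharpoonup z_\infty$ in $X$ along your subsequence, the rest of your first half goes through unchanged.

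A few smaller remarks on the rest. Your upgrade to strong $V$-convergence via $\int(-\Delta z+\sigma z-f)(z_\infty-z)\ge 0$ is equivalent to testing the unilateral equilibrium (Theorem \ref{thm:qua}(ii)) with $v=z_\infty\le z(t)$, and it works; but note the displayed intermediate bound should be on $\|\nabla z(t_n)\|^2+\sigma\|z(t_n)\|^2$ rather than $\|\nabla z(t_n)\|^2$ alone, though the conclusion is the same once strong $L^2$-convergence is in hand. In the second half your discrete comparison is correct once you replace $f(t^j)$ by the averaged data $f_k$ from \eqref{eq:defoffk} (assumption \eqref{eq:xi} gives $f_k\ge f_\infty$ a.e.), and the quadratic max/min splitting plus strict convexity of $\mathcal E_\infty$ does yield $w^*\le z_k$ by induction, hence $w^*\le z_\infty$. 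This is a self-contained alternative to the paper's shorter argument, which instead observes that $w(t)\equiv\zeta$ is a global strong solution with forcing $f_\infty$ and applies the time-dependent comparison principle (Proposition \ref{pro:comp}), and then compares $z_\infty$ and $\zeta$ as solutions of the static obstacle problem (Proposition \ref{pro:static}). Your use of Theorem \ref{thm:qua}(ii) plus strict convexity for the reverse inequality is fine and parallels the paper's use of static comparison. So: the second half is a valid alternative; the first half needs the Lewy--Stampacchia $L^2$ bound on $-\Delta z(t)+\sigma z(t)$ along a sequence of times to deliver the claimed $X$-regularity and the a.e.\ form of the stationary inequality.
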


We shall finally characterize \eqref{eq:vi} (or equivalently \eqref{eq}) as a singular limit of the parabolic equation,
\begin{equation}\label{eq-ep}
\vep \partial_t z_\vep + \dind \left( \partial_t z_\vep \right) - \Delta z_\vep + \sigma z_\vep \ni f_\vep \quad \mbox{ in } \ \Omega \times (0,T),
\end{equation}
equipped with the initial and boundary conditions,
\begin{equation}\label{bcic-ep}
\left\lbrace
\begin{alignedat}{4}
&z_\vep = 0 & \quad &\mbox{ on } \ \ddom \times (0,T),\\
&\partial_\nu z_\vep = 0 &&\mbox{ on } \ \ndom \times (0,T),\\
&z_\vep(\cdot,0) = z_{0,\vep} &&\mbox{ in } \ \Omega.
\end{alignedat}
\right.
\end{equation}
More precisely, we shall prove convergence of strong solutions $z_\vep = z_\vep(x,t)$ for \eqref{eq-ep}, \eqref{bcic-ep} to the strong solution $z = z(x,t)$ for \eqref{eq:vi}, \eqref{bcic} as $\vep \to 0_+$. Our result reads,

\begin{thm}[Characterization as a singular limit]\label{thm:conv}
Assume that \eqref{eq:ass} is satisfied.
For each $\vep > 0$, let $f_\vep \in L^2(0,T;L^2(\Omega)) \cap W^{1,2}(0,T;V^*)$ satisfy {\rm (ii)} of Theorem {\rm \ref{thm:main}} with $f$ replaced by $f_\vep$ and let $z_{0,\vep} \in V$ satisfy
\begin{equation*}
\Delta z_{0,\vep} \in \mathcal M(\overline{\Omega}) \quad \mbox{ and } \quad (\Delta z_{0,\vep})_- \in L^2(\Omega),
\end{equation*} 
where $\mathcal{M}(\overline{\Omega})$ denotes the set of signed Radon measures and $ \mu_- $ stands for the negative part of $ \mu \in \mathcal{M}(\overline{\Omega}) $ defined by the Hahn-Jordan decomposition.
Then there exists a unique strong solution $ z_\vep \in W^{1,2}(0,T;L^2(\Omega)) \cap L^2(0,T;X \cap V) $ to \eqref{eq-ep}, \eqref{bcic-ep}. Moreover, let $ z_0 \in X \cap V $ and $ f \in L^2(0,T;L^2(\Omega)) \cap W^{1,2}(0,T;V^*) $ satisfy the assumptions {\rm (i)--(iii)} of Theorem {\rm \ref{thm:main}} and let $ z \in W^{1,2}(0,T;V) \cap L^2(0,T;X) $ be the strong solution to \eqref{eq:vi} {\rm (}or equivalently, \eqref{eq}{\rm )} and \eqref{bcic}. Then there exists a constant $C > 0$ independent of $ \vep $ and $T$ such that
\begin{align*}
\sup_{t \in[0,T]} \left\| z_\vep(t)-z(t) \right\|_V &\leq C \, \Big( \left\| z_{0,\vep}-z_0 \right\|_V + \sup_{t \in [0,T]} \left\| f_\vep(t)-f(t) \right\|_{V^*}\\
&\quad + \sqrt{\vep} \left\| \partial_t z \right\|_{L^2(0,T;L^2(\Omega))} + \left\| \partial_t f_\vep - \partial_t f \right\|_{L^1(0,T;V^*)} \Big).
\end{align*}
In particular, if $z_{0,\vep} \to z_0$ strongly in $V$ and $f_\vep \to f$ strongly in $W^{1,1}(0,T;V^*)$ as $\vep \to 0_+$, then
\[
z_\vep \to z \quad \mbox{ strongly in } \ C \left( [0,T];V \right).
\]
\end{thm}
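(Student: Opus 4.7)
The plan has three ingredients, to be carried out in order: existence and uniqueness of the parabolically regularized solution $z_\vep$, a quantitative comparison with the limit $z$, and the consequent singular limit. For existence, I would adapt the minimizing movement scheme developed earlier in the paper for \eqref{eq:vi}, replacing the dissipation potential $\psi = I_K$ by $\psi_\vep := \tfrac{\vep}{2}\|\cdot\|_H^2 + I_K$. Since $\psi_\vep$ is strictly convex and coercive on $H$, each time-step minimization has a unique minimizer and the discrete Euler--Lagrange inclusion is the implicit Euler discretization of \eqref{eq-ep}. All $\tau$-uniform a priori estimates transfer essentially verbatim from the proof of Theorem \ref{thm:main}; the hypothesis $(\Delta z_{0,\vep})_- \in L^2(\Omega)$ is what ensures the $L^2$-bound on the first discrete time-derivative, exactly as in the companion work \cite{AK}. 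Uniqueness follows from monotonicity of $\vep I + \dind$ and of $-\Delta + \sigma I$.

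For the quantitative comparison, set $w := z_\vep - z$ and let $\xi_t^\vep \in \dind(\partial_t z_\vep)$, $\xi_t \in \dind(\partial_t z)$ be the $L^2$-measurable selections from the strong-solution definition. Subtracting the two equations yields
\[
\vep\,\partial_t z_\vep + (\xi_t^\vep - \xi_t) - \Delta w + \sigma w = f_\vep - f \quad \text{a.e.\ in } \Omega\times(0,T),
\]
which I test against $\partial_t w$. Two sign observations are crucial. First, since $\xi_t, \xi_t^\vep \geq 0$, $\partial_t z, \partial_t z_\vep \leq 0$, and $\xi_t \partial_t z = \xi_t^\vep \partial_t z_\vep = 0$ a.e., one obtains
\[
(\xi_t^\vep - \xi_t)(\partial_t z_\vep - \partial_t z) = -\xi_t^\vep \partial_t z - \xi_t \partial_t z_\vep \geq 0,
\]
so this term is discarded. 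Second, since no $\vep$-uniform control on $\|\partial_t z_\vep\|_H$ is available --- and this is the main technical obstacle --- I handle the $\vep\,\partial_t z_\vep$ term by a Young splitting:
\[
\vep\int_\Omega \partial_t z_\vep\,\partial_t w\,\d x = \vep\|\partial_t z_\vep\|_H^2 - \vep\int_\Omega \partial_t z_\vep\,\partial_t z\,\d x \geq \tfrac{\vep}{2}\|\partial_t z_\vep\|_H^2 - \tfrac{\vep}{2}\|\partial_t z\|_H^2,
\]
so after time integration this contribution is dominated below by $-\tfrac{\vep}{2}\|\partial_t z\|_{L^2(0,T;H)}^2$, which is exactly the origin of the $\sqrt{\vep}\,\|\partial_t z\|_{L^2(L^2)}$ term in the claim.

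The diffusive term contributes $\tfrac12 \tfrac{d}{dt}\bigl(\|\nabla w\|_H^2 + \sigma\|w\|_H^2\bigr)$ by virtue of the mixed boundary conditions. For the forcing, integration by parts in time gives
\[
\int_0^t \langle f_\vep - f, \partial_t w\rangle_V\,\d s = \langle (f_\vep - f)(t), w(t)\rangle_V - \langle (f_\vep - f)(0), w(0)\rangle_V - \int_0^t \langle \partial_t(f_\vep - f), w\rangle_V\,\d s,
\]
each piece being bounded by $V^*$-$V$ duality and Young's inequality, with a fraction of $\|w(t)\|_V^2$ absorbed on the left. Assumption (i) of Theorem \ref{thm:main} provides $\|\nabla w\|_H^2 + \sigma\|w\|_H^2 \geq c\|w\|_V^2$ (either directly when $\sigma>0$ or by Poincaré when $\sigma = 0$); assembling the above bounds and taking the supremum in $t$ yields the asserted estimate. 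The strong convergence $z_\vep \to z$ in $C([0,T];V)$ under $z_{0,\vep} \to z_0$ in $V$ and $f_\vep \to f$ in $W^{1,1}(0,T;V^*)$ is then immediate, since every term on the right-hand side of the estimate tends to zero as $\vep \to 0_+$.
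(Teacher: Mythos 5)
Your proof takes essentially the same route as the paper: after establishing well-posedness for the regularized problem \eqref{eq-ep}, \eqref{bcic-ep} (which the paper delegates to \cite[Theorem 1.8]{AK} rather than rebuilding a minimizing-movement scheme), you subtract the two inclusions, test by $\partial_t(z_\vep - z)$, discard the nonnegative term coming from the monotonicity of $\partial I_{(-\infty,0]}$, control the $\vep\,\partial_t z_\vep$ term by a Young splitting that produces the $\sqrt{\vep}\,\|\partial_t z\|_{L^2(0,T;L^2(\Omega))}$ contribution, and integrate in time. Two details should be made explicit in a careful writeup: the integration by parts on the forcing term needs the generalized Leibniz rule of Proposition \ref{pro:leibniz} (since $\partial_t(z_\vep - z)$ lies only in $L^2(0,T;L^2(\Omega))$, not in $L^2(0,T;V)$), and passing from the resulting integral inequality to the stated $C([0,T];V)$ bound with the $L^1(0,T;V^*)$ norm of $\partial_t(f_\vep-f)$ requires a Gronwall-type inequality (the paper cites \cite[Lemma A.5]{HBF}) rather than a plain Young absorption of $\|z_\vep(t)-z(t)\|_V^2$ at the final time.
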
 

\bigskip
\noindent
{\bf Plan of the paper.} \ The present paper is composed of six sections. In Section \ref{sec:pre}, we shall set up notation and recall some preliminary facts which will be used in later sections. Section \ref{sec:main} is devoted to proving Theorem \ref{thm:main} via the so-called minimizing movement scheme without parabolic regularization. In Section \ref{sec:qua}, we shall first prove comparison principle for \eqref{eq:vi} (or \eqref{eq}) (see \S \ref{ssec:comp} below), and then we shall verify Theorem \ref{thm:qua} (see \S \ref{ssec:qua} below). Section \ref{sec:global} concerns global dynamics of global-in-time solutions for the initial-boundary value problem. We shall first prove Theorem \ref{thm:global} and then verify Theorem \ref{thm:asympt} concerning convergence of each global-in-time solution to an equilibrium, which can be characterized as a solution to some variational inequality of obstacle type. The final section is devoted to discussing a singular limit of strong solutions for \eqref{eq-ep}, \eqref{bcic-ep} as $\vep \to 0_+$ to characterize strong solutions for \eqref{eq:vi} (or equivalently \eqref{eq}), \eqref{bcic} (see Theorem \ref{thm:conv}).

\section{Notation and preliminaries}\label{sec:pre}

In this section, we set up notation and recall some preliminary facts for later use.

\subsection{Notation}\label{ssec:note}

For $a,b \in \R$, we denote by $a \wedge b$ and $a \vee b$ the minimum and maximum of $a$ and $b$, respectively. This notation will also be used for functions $f, g : M \to \R$ from a set $M$ to $\R$, i.e., we write
\[
\left( f \wedge g \right) (x) := f(x) \wedge g(x) \quad \mbox{ and } \quad (f \vee g) (x) := f(x) \vee g(x) \quad \mbox{ for } \ x \in M.
\]
For $d \in \N = \mathbb{Z}_{\geq 1}$, let $\H^d ( \cdot )$ denote the $d$-dimensional Hausdorff measure. Let $\Omega$ be an open subset of $\R^N$. For $1 \leq p \leq +\infty$ and $m \in \N$, we denote by $L^p(\Omega)$ and $W^{m,p}(\Omega)$ the Lebesgue and Sobolev spaces, respectively, and write $H^m(\Omega) = W^{m,2}(\Omega)$. For Lebesgue measurable sets $B$ in $\R^d$, $ d \in \N $, we denote by $|B|$ the $d$-dimensional Lebesgue measure of $B$.

For a Banach space $E$, let $E^*$ denote its dual space. The duality pairing of $u \in E$ and $f \in E^*$ is denoted by $\left\langle f,u \right\rangle_E$. Let $I$ be a (non-empty) open interval on $\R$. For $1 \leq p \leq +\infty$, we denote by $L^p(I;E)$ and $W^{1,p}(I;E)$ the Bochner space and Sobolev-Bochner space, respectively. Moreover, we often write $\partial_t = \partial / \partial t$.

Furthermore, we denote by $C$ a non-negative constant which is independent of the elements of the corresponding space or set and may vary from line to line.

\subsection{Subdifferential operator}\label{ssec:subdif}

Let $H$ be a real Hilbert space equipped with an inner product $(\cdot,\cdot)_H$ and let $\vphi : H \to (-\infty, +\infty]$ be a proper, i.e., $\varphi \not\equiv +\infty$, lower-semicontinuous convex functional, whose \emph{effective domain} is defined by $D(\vphi) = \{ u \in H \colon \vphi(u) < +\infty \}$. The \emph{subdifferential operator} $\partial \vphi : H \to 2^H$ of $\varphi$ is defined by
$$
\partial \vphi(u) = \left\{ \xi \in H \colon \vphi(v)-\vphi(u) \geq ( \xi, v-u )_H \quad \mbox{for } \ v \in D(\vphi) \right\} \ \mbox{ for } \ u \in H
$$
(here we set $\partial \varphi(u) = \emptyset$ when $u \not\in D(\varphi)$) with domain $D(\partial \vphi) := \{ u \in H \colon \partial \vphi(u) \neq \emptyset \}$. It is well known that $ \partial \vphi $ is maximal monotone in $H \times H$ (see, e.g.,~\cite{HBF} for more details).

\subsection{Sobolev spaces for mixed boundary conditions}\label{ssec:fsp}

Let $\Omega$ be a bounded domain of $\R^N$ with Lipschitz boundary $\dom$.
Let $ \ddom $ and $ \ndom $ be relatively open subsets of $ \dom $ such that $ \H^{N-1} (\dom \setminus (\ddom \cup \ndom)) = 0 $ and $ \ddom \cap \ndom = \emptyset $.
We set
\begin{align*}
V &= \{ u \in H^1(\Omega) \colon \gamma(u)=0 \quad \H^{N-1} \mbox{-a.e.~on } \, \ddom \}, \\
X &= \{ u \in H^2(\Omega) \colon \partial_\nu u =0 \quad \H^{N-1} \mbox{-a.e.~on } \, \ndom \},
\end{align*}
where $\gamma$ stands for the trace operator, $\nu$ denotes the outer unit normal vector field on $\dom$ and $\partial_\nu$ denotes the normal derivative on $\dom$, i.e., $\partial_\nu u = \gamma(\nabla u) \cdot \nu$ for $ u \in H^2(\Omega) $. We shall omit the trace operator when no confusion can arise. We set 
\[
\| v \|_V := \| v \|_{H^1(\Omega)} \quad \mbox{ for } \ v \in V \quad \mbox{ and } \quad \| v \|_X := \| v \|_{H^2(\Omega)} \quad \mbox{ for } \ v \in X.
\]
Moreover, we shall write $f \leq g$ in $V^*$ for $f,g \in V^*$, if $\langle f, v \rangle_V \leq \langle g, v \rangle_V$ for any $v \in V$ satisfying $v \geq 0$ a.e.~in $\Omega$.

\subsection{A chain-rule formula}\label{ssec:chain}

Define $\phi : L^2(\Omega) \to [0,+\infty]$ by
\begin{equation}\label{eq:phi}
\phi (v) := \begin{cases}
\frac{1}{2} \int_\Omega \left| \nabla v(x) \right|^2 \d x &\mbox{ if } \ v \in V,\\
+\infty &\mbox{ if } \ v \in L^2(\Omega) \setminus V.
\end{cases}
\end{equation}
Then the following fact may be well known (see, e.g.,~\cite{HB,HB3,Barbu},~\cite[Lemmas 3.1 and 3.4]{AK}):

\begin{lem}\label{lem:phi}
The functional $\phi$ defined by \eqref{eq:phi} is convex and lower-semicontinuous in $L^2(\Omega)$. Moreover, if \eqref{eq:ass} is satisfied, it then holds that
\begin{equation*}
D(\partial \phi) = X \cap V \quad \mbox{ and } \quad \partial \phi(v) = -\Delta v \quad \mbox{ for } \ v \in X \cap V.
\end{equation*}
Furthermore, let $u \in W^{1,2}(0,T;L^2(\Omega)) \cap L^2(0,T;X \cap V)$. Then $u \in C([0,T];V)$, and moreover, the function
\[
t \mapsto \phi (u(t)) = \frac{1}{2} \int_\Omega \left| \nabla u(x,t) \right|^2 \d x
\]
is absolutely continuous on $[0,T]$ and complies with the chain-rule formula,
\begin{equation}\label{eq:chain}
\frac{\d}{\d t} \, \phi ( u(t) ) = \left( \partial_t u(t), - \Delta u(t) \right)_{L^2(\Omega)} \quad \mbox{ for a.e.~} t \in (0,T).
\end{equation}
\end{lem}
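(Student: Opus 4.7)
My plan is to handle the three assertions in order. Convexity is immediate: $V$ is a linear subspace of $H^1(\Omega)$ and $v \mapsto \tfrac{1}{2}\int_\Omega |\nabla v|^2 \, \d x$ is a non-negative quadratic form on it, so the extension by $+\infty$ outside $V$ is convex on $L^2(\Omega)$. For lower-semicontinuity, let $v_n \to v$ in $L^2(\Omega)$ with $\ell := \liminf_{n} \phi(v_n) < +\infty$; along a subsequence realizing $\ell$, $\{v_n\}$ is bounded in $H^1(\Omega)$, so a further subsequence converges weakly in $H^1(\Omega)$ to $v$ (by uniqueness of the $L^2$-limit). Since $V$ is weakly closed in $H^1(\Omega)$, $v \in V$, and weak lower-semicontinuity of the quadratic form gives $\phi(v) \leq \ell$.

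For the identification of $\partial \phi$, take first $v \in X \cap V$ and any $w \in V$. Green's identity combined with the boundary conditions $w = 0$ on $\ddom$ (from $w \in V$) and $\partial_\nu v = 0$ on $\ndom$ (from $v \in X$) yields $\int_\Omega \nabla v \cdot \nabla(w-v) \, \d x = \int_\Omega (-\Delta v)(w-v) \, \d x$; inserting this into the quadratic convexity inequality $\phi(w) - \phi(v) \geq \int_\Omega \nabla v \cdot \nabla(w-v) \, \d x$ (valid on $V$, trivial for $w \notin V$) produces $-\Delta v \in \partial \phi(v)$. Conversely, take $v \in D(\partial \phi)$ and $\xi \in \partial \phi(v)$; then $v \in V$, and perturbing by $v \pm \epsilon h$ for $h \in V$ and letting $\epsilon \to 0_+$ gives
\[
\int_\Omega \nabla v \cdot \nabla h \, \d x = \int_\Omega \xi h \, \d x \quad \text{for all } h \in V,
\]
i.e., $Bv = v + \xi$ in $V^*$ with $B$ from \eqref{eq:B}. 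Since $v + \xi \in L^2(\Omega)$, the elliptic regularity assumption \eqref{eq:ass} delivers $v = B^{-1}(v+\xi) \in H^2(\Omega) \cap V$, and a second Green's identity forces the weak Neumann condition encoded in $B$ to become the pointwise $\partial_\nu v = 0$ on $\ndom$ (so $v \in X$), while identifying $\xi = -\Delta v$. This proves $D(\partial\phi) = X \cap V$ and $\partial \phi(v) = \{-\Delta v\}$ on this domain.

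For the chain rule, let $u \in W^{1,2}(0,T;L^2(\Omega)) \cap L^2(0,T;X \cap V)$; by the previous step, $-\Delta u \in L^2(0,T;L^2(\Omega))$ and $-\Delta u(t) \in \partial \phi(u(t))$ for a.e.~$t \in (0,T)$. The classical chain rule for subdifferentials of proper convex lower-semicontinuous functionals on a Hilbert space (see, e.g., \cite{HB,Barbu}) then asserts that $t \mapsto \phi(u(t))$ is absolutely continuous on $[0,T]$ with derivative $\left(\partial_t u(t), -\Delta u(t)\right)_{L^2(\Omega)}$, which is exactly \eqref{eq:chain}. In particular, $t \mapsto \|\nabla u(t)\|_{L^2(\Omega)}^2$ is continuous; combined with $u \in W^{1,2}(0,T;L^2(\Omega)) \hookrightarrow C([0,T];L^2(\Omega))$, the full norm $\|u(t)\|_V^2 = 2\phi(u(t)) + \|u(t)\|_{L^2(\Omega)}^2$ is continuous in $t$, and this norm continuity together with weak $V$-continuity (immediate from uniform $V$-boundedness combined with strong $L^2$-continuity) yields $u \in C([0,T];V)$. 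The main obstacle throughout is the converse inclusion $D(\partial\phi) \subset X \cap V$, which directly rests on the elliptic regularity \eqref{eq:ass}; the remaining pieces are standard convex-analytic machinery.
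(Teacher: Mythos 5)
The paper does not actually prove Lemma \ref{lem:phi}; it merely cites \cite{HB,HB3,Barbu} and \cite[Lemmas 3.1 and 3.4]{AK}. Your argument is a correct and complete reconstruction of exactly the standard convex-analytic reasoning those references supply (weak closedness of $V$ for lower-semicontinuity, Green's identity together with the elliptic regularity hypothesis \eqref{eq:ass} for the two-sided identification of $\partial\phi$, and Br\'ezis's chain rule for subdifferentials), so it matches the paper's approach.
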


\section{Well-posedness}\label{sec:main}

This section is devoted to proving Theorem \ref{thm:main}. To this end, we shall prove well-posedness for \eqref{eq}, which is equivalent to \eqref{eq:vi} in the strong formulation (see Definition \ref{D:sol}). Before proceeding to a proof, we give the following

\begin{rem}\label{rem:ex}
\begin{enumerate}
\item Whenever $f$ belongs to $W^{1,1}(0,T;L^2(\Omega))$, assumption (ii) of Theorem \ref{thm:main} follows with $\hat{f} \in L^2(\Omega)$ given by
\[
\hat{f}(x) := f(x,0) - \int^T_0 \left| \partial_t f (x,s) \right| \d s \quad \mbox{ for } \ x \in \Omega.
\]
Indeed, we see that
\[
f(x,t) = f(x,0) + \int^t_0 \partial_t f(x,s) \,\d s \geq \hat{f}(x) \quad \mbox{ for a.e.~} (x,t) \in \Omega \times (0,T)
\]
by virtue of the fundamental theorem of calculus. In~\cite[Example 1, p.119]{HB2}, existence and uniqueness of strong solutions for \eqref{eq:vi} with $\sigma = 0$ and the homogeneous Dirichlet condition are also proved for $f \in W^{1,1}(0,T;L^2(\Omega))$. On the other hand, the assumptions on $f$ in Theorem \ref{thm:main} is strictly weaker than $f \in W^{1.1}(0,T;L^2(\Omega))$. Indeed, for instance, set $f(x,t) = - |x-t|^{1-\alpha}$ with $\alpha \in (1/2,1)$ for $x,t \in (0,1)$. Then one may easily check $f \geq \hat f \equiv -1$ and $f \in L^\infty((0,1) \times (0,1)) \subset L^2(0,1;L^2(0,1))$. Moreover, noting that $\partial_t f = \partial_x f$, we find that $\|\partial_t f(\cdot,t)\|_{H^{-1}(0,1)} \leq \|f(\cdot,t)\|_{L^2(0,1)}$ for $t \in (0,1)$, and hence, $\partial_t f \in L^2(0,1;H^{-1}(0,1))$. However, by virtue of $\alpha > 1/2$, $\partial_t f(\cdot,t)$ never lies on $L^2(0,1)$ for any $t \in (0,1)$. Therefore $f$ complies with all the assumptions as in Theorem \ref{thm:main} with $\Omega = (0,1)$, $\ndom = \emptyset$ and $T = 1$, but it never belongs to $W^{1,1}(0,1;L^2(0,1))$.
\item The assumption (iii) of Theorem \ref{thm:main} can be regarded as an admissible condition for the initial data, and it is also necessary for existence of strong solutions to \eqref{eq:vi}, \eqref{bcic}. Indeed, if there exists a strong solution $z = z(x,t)$ to \eqref{eq:vi} and \eqref{bcic}, then we see that
\begin{equation}\label{eq:adm}
\Phi(t) := \int_\Omega \nabla z(x,t) \cdot \nabla v(x) \, \d x + \sigma \int_\Omega z(x,t) v(x) \, \d x - \langle f(t), v\rangle_V \leq 0
\end{equation}
for every $v \in V$ satisfying $v \geq 0$ a.e.~in $\Omega$ and for a.e.~$t \in (0,T)$. On the other hand, $\Phi$ is continuous on $[0,T]$ since $z \in C([0,T];V)$ (see Lemma \ref{lem:phi}) and $f \in W^{1,2}(0,T;V^*) \subset C([0,T];V^*)$. Hence \eqref{eq:adm} also holds true for all $t \in [0,T]$; in particular, $\Phi(0) \leq 0$, i.e., $-\Delta z_0 + \sigma z_0 - f(\cdot,0) \leq 0$ in $V^*$.
\end{enumerate}
\end{rem}

We first show existence of strong solutions to \eqref{eq:vi} (or equivalently \eqref{eq}) along with \eqref{bcic}.

\subsection{Time-discretization}\label{ssec:dis}

Let $m \in \N$ and set $\tau=T/m$ and $t_k=k \tau$ for $k=0,1,\dots,m$.
Moreover, we set
\begin{alignat}{4}
f_k &:= \frac{1}{\tau} \int^{t_k}_{t_{k-1}} f(\cdot,s) \,\d s \; \in L^2(\Omega) \quad \mbox{ for } \ k=1,2,\dots,m,\label{eq:defoffk}\\
f_0 &:= f(\cdot,0) \; \in V^*.\label{eq:defoff0}
\end{alignat}
Then the assumption (iii) of Theorem \ref{thm:main} yields
\begin{equation}
\Delta z_0 - \sigma z_0 + f_0 \geq 0 \quad \mbox{ in } \ V^*. \label{ass:ini}
\end{equation}

Let us consider the following discretized problem for \eqref{eq}: For each $ k=1,\dots,m $, find a (strong) solution $z_k \in X \cap V$ of
\begin{equation}\label{eq:dis}
\left\lbrace
\begin{alignedat}{4}
&\disp \dind \left( \frac{z_k-z_{k-1}}{\tau} \right) -\Delta z_k + \sigma z_k \ni f_k & \quad &\mbox{ in } \ \Omega,\\
&z_k =0 &&\mbox{ on } \ \ddom,\\
&\partial_\nu z_k = 0 &&\mbox{ on } \ \ndom,
\end{alignedat}
\right.
\end{equation}
where the inclusion is equivalent to
$$
\disp \partial I_{(-\infty, 0]} \left( z_k-z_{k-1} \right) -\Delta z_k + \sigma z_k \ni f_k \quad \mbox{ in } \ \Omega.
$$
To this end, given ${\psi} \in X \cap V$ and $g \in L^2(\Omega)$, we shall find a function $z \in X \cap V$ solving
\begin{equation}\label{eq:static}
\left\lbrace
\begin{alignedat}{4}
&\dind \left( z-\psi \right) -\Delta z + {\sigma} z \ni g & \quad &\mbox{ in } \ \Omega,\\
&z =0 &&\mbox{ on } \ \ddom,\\
&\partial_\nu z = 0 &&\mbox{ on } \ \ndom.
\end{alignedat}
\right.
\end{equation}

\begin{defi}\label{def:static}
A measurable function $z = z(x)$ is called a \emph{strong solution} to \eqref{eq:static}, if the following (i)--(iii) are all satisfied:
\begin{enumerate}
\item $z \in X \cap V$;
\item $z \leq {\psi} \,$ a.e.~in $\Omega$;
\item there exists $\xi \in L^2(\Omega)$ such that $\xi \in \dind \left( z-{\psi} \right)$ and $\xi = \Delta z - {\sigma} z + g $ a.e.~in $\Omega$.
\end{enumerate}
In particular, $ z \in X \cap V $ is a strong solution to \eqref{eq:static} if and only if it holds that
\begin{equation}\label{eq2:static}
z \leq \psi, \quad - \Delta z + {\sigma} z \leq g, \quad (-\Delta z + {\sigma} z - g) (z - \psi) = 0 \quad \mbox{ a.e.~in } \ \Omega.
\end{equation}
\end{defi}

Thanks to~\cite[Theorems 2.1 and 2.2]{AK}, we have the following proposition concerned with existence and the Lewy-Stampacchia estimate for strong solutions to \eqref{eq:static}:

\begin{prop}\label{pro:static}
For any ${\psi} \in X \cap V$ and $g \in L^2(\Omega)$, there exists a unique strong solution $z = z(x)$ to \eqref{eq:static} such that the so-called Lewy-Stampacchia's inequality holds true\/{\rm :}
\begin{equation}\label{eq:ls}
g \wedge \left( -\Delta {\psi} + {\sigma} {\psi} \right) \leq -\Delta z + {\sigma} z \leq g \quad \mbox{ a.e.~in } \ \Omega.
\end{equation}

Furthermore, let ${\psi}^j \in X \cap V$ and $g^j \in L^2(\Omega)$ for $j=1,2$ satisfy
\[
{\psi}^1 \leq {\psi}^2 \quad \mbox{ and } \quad g^1 \leq g^2 \quad \mbox{ a.e.~in } \ \Omega.
\]
Then the strong solutions $z^j$ to \eqref{eq:static} with ${\psi} = {\psi}^j$ and $g=g^j$ for $j=1,2$ also satisfy
\begin{equation}\label{eq:comp}
z^1 \leq z^2 \quad \mbox{ a.e.~in } \ \Omega.
\end{equation}
\end{prop}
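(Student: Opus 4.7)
The plan is to realize $z$ as the unique minimizer of the strictly convex quadratic functional
\[
J(v) := \frac{1}{2}\int_\Omega |\nabla v|^2\,\d x + \frac{\sigma}{2}\int_\Omega v^2\,\d x - \int_\Omega g v\,\d x
\]
over the nonempty closed convex set $K_\psi := \{v \in V : v \leq \psi \text{ a.e.~in } \Omega\}$, which contains $\psi \wedge 0$. Under the relevant coercivity (Poincar\'e on $V$ when $\H^{N-1}(\ddom) > 0$, or the zero-order term when $\sigma > 0$ in the purely Neumann case), weak lower semicontinuity of $J$ yields a unique minimizer $z \in V$, characterized by the variational inequality
\[
\int_\Omega \nabla z \cdot \nabla(v - z)\,\d x + \sigma \int_\Omega z(v - z)\,\d x \geq \int_\Omega g(v - z)\,\d x \quad \mbox{for all } \ v \in K_\psi.
\]
Uniqueness is immediate from the strict convexity of $J$. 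Choosing $v = z - \eta$ with nonnegative $\eta \in V \cap L^\infty(\Omega)$ (still in $K_\psi$ since $z - \eta \leq z \leq \psi$) already gives the upper Lewy-Stampacchia bound $-\Delta z + \sigma z \leq g$ in $V^*$.

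For the lower Lewy-Stampacchia estimate I would use the classical penalization. For $\vep > 0$, consider
\[
-\Delta z_\vep + \sigma z_\vep + \frac{1}{\vep} (z_\vep - \psi)_+ = g \quad \mbox{in } \ \Omega
\]
with the boundary conditions of \eqref{eq:static}; existence of $z_\vep \in X \cap V$ follows from the standard theory of monotone elliptic equations combined with the regularity \eqref{eq:ass}. A Stampacchia-type truncation test (testing the equation, after subtracting $-\Delta \psi + \sigma \psi$ from both sides, against an appropriate truncation of the penalization term) yields the a priori pointwise bound
\[
\frac{1}{\vep} (z_\vep - \psi)_+ \leq \bigl(g - (-\Delta \psi + \sigma \psi)\bigr)_+ \quad \mbox{a.e.~in } \ \Omega.
\]
Standard monotonicity arguments then give $z_\vep \to z$ in $V$ and $\tfrac{1}{\vep}(z_\vep - \psi)_+ \wto \xi \in \dind(z - \psi)$ weakly in $L^2(\Omega)$ as $\vep \to 0_+$, whence
\[
-\Delta z + \sigma z = g - \xi \geq g - \bigl(g - (-\Delta \psi + \sigma \psi)\bigr)_+ = g \wedge (-\Delta \psi + \sigma \psi) \quad \mbox{a.e.~in } \ \Omega.
\]

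With both Lewy-Stampacchia bounds in hand, $-\Delta z + \sigma z$ is sandwiched between two $L^2$ functions (since $\psi \in X$ gives $-\Delta \psi \in L^2(\Omega)$) and hence lies in $L^2(\Omega)$; the elliptic regularity assumption \eqref{eq:ass} then upgrades $z \in V$ to $z \in X \cap V$, and Definition \ref{def:static}(iii) is satisfied with $\xi := g - (-\Delta z + \sigma z) \in L^2(\Omega)$. For the comparison \eqref{eq:comp}, I would subtract the two equations, test with $(z^1 - z^2)_+ \in V$, and observe that on $\{z^1 > z^2\}$ the assumption $\psi^1 \leq \psi^2$ forces either $\xi^1 \geq 0$ and $\xi^2 = 0$ (when $z^1 = \psi^1$, since then $z^2 < \psi^1 \leq \psi^2$) or $\xi^1 = \xi^2 = 0$ (when $z^1 < \psi^1$), so $(\xi^1 - \xi^2)(z^1 - z^2)_+ \geq 0$; combined with the nonnegative gradient and zero-order terms on the left and $g^1 \leq g^2$ on the right, this forces $(z^1 - z^2)_+ \equiv 0$. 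The main obstacle in the argument is the lower Lewy-Stampacchia estimate: the remaining pieces are essentially routine for obstacle problems, but the dual bound relies on the penalization plus the Stampacchia truncation above, which is the nontrivial content imported from \cite[Theorems 2.1 and 2.2]{AK}.
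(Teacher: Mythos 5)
Your route is essentially the paper's: reduce \eqref{eq:static} to the obstacle problem of minimizing $J$ over $K_\psi=\{v\in V:v\leq\psi\}$, obtain the Lewy–Stampacchia sandwich, upgrade $z$ to $X\cap V$ by elliptic regularity, and derive comparison from a truncation test. The paper devotes its own argument to the equivalence ``$z$ minimizes $J$ over $K_\psi$ iff $z$ is a strong solution of \eqref{eq:static}'' and outsources existence, $X$-regularity, Lewy–Stampacchia and comparison wholesale to \cite[Theorems~2.1,~2.2]{AK}; you instead re-sketch the penalization machinery (which is precisely what [AK] does) and bypass the explicit equivalence. The one genuine gap is uniqueness: ``immediate from strict convexity of $J$'' only yields uniqueness of the \emph{minimizer}, not of strong solutions, which is what the proposition asserts. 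To close this you must either show (as the paper does) that every strong solution of \eqref{eq:static} is a minimizer of $J$ over $K_\psi$ — a two-line consequence of \eqref{eq2:static} — or argue directly: if $z,z'$ are two strong solutions with $\xi\in\dind(z-\psi)$, $\xi'\in\dind(z'-\psi)$, subtract and test against $z-z'$; the term $\int_\Omega(\xi-\xi')(z-z')\,\d x$ is nonnegative by monotonicity of $\dind$, forcing $z=z'$ under the coercivity you already invoke. This is the same device you used for the comparison principle, so the fix is short, but as written uniqueness of the object the proposition actually claims is not established.
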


\begin{proof}
Define a functional $J : V \to \R$ by
\[
J(z) = \frac{1}{2} \int_\Omega \left| \nabla z \right|^2 \d x + \frac{{\sigma}}{2} \int_\Omega \left| z \right|^2 \d x - \int_\Omega gz \,\d x \quad \mbox{ for } \ z \in V,
\]
and set
\[
K := \left\{ v \in V \colon v \leq {\psi} \quad \mbox{a.e.~in } \ \Omega \right\}.
\]
Thanks to~\cite[Theorems 2.1 and 2.2]{AK}, there exists a unique minimizer $z$ of $J$ over $K$ such that $z \in X \cap K$; moreover, the Lewy-Stampacchia inequality \eqref{eq:ls} and the comparison principle (see \eqref{eq:comp}) hold true.

We claim that $z \in X \cap K$ minimizes $J$ over $K$ if and only if $ z $ is a solution to \eqref{eq:static}. Indeed, if $z \in X \cap K$ minimizes $J$ over $K$, then 
\begin{equation*}
J(z) \leq J(u) \quad \mbox{ for any } \ u \in K.
\end{equation*}
Since $K$ is convex, it follows that
\begin{align*}
J(z) \leq J \left( (1-\theta) z + \theta v \right) \quad \mbox{ for } \ v \in K \ \mbox{ and } \ \theta \in (0,1),
\end{align*}
which along with the arbitrariness of $\theta \in (0,1)$ implies
\begin{align}
0 &\leq -\int_\Omega \left| \nabla z \right|^2 \d x + \int_\Omega \nabla z \cdot \nabla v \,\d x - {\sigma} \int_\Omega \left| z \right|^2 \d x + {\sigma} \int_\Omega zv \,\d x - \int_\Omega g \left( v-z \right) \d x\notag\\
&= \int_\Omega \left( -\Delta z + {\sigma} z - g \right) \left( v-z \right) \d x \quad \mbox{ for } \ v \in K. \label{eq:star}
\end{align}
Moreover, let $w \in V$ be such that $w \leq 0$ a.e.~in $\Omega$. Substituting $v=z+w \in K$ in \eqref{eq:star}, we obtain
\begin{equation*}
0 \leq \int_\Omega \left( -\Delta z + {\sigma} z - g \right) w \, \d x \quad \mbox{ for } \ w \in V \quad \mbox{ satisfying } \quad w \leq 0 \quad \mbox{ a.e.~in } \ \Omega,
\end{equation*}
which implies that $-\Delta z + {\sigma} z - g \leq 0$ a.e.~in $\Omega$. Furthermore, it follows from \eqref{eq:star} with $v = 2z - \psi \in K$ and that with $v = \psi \in K$ that
\[
\int_\Omega \left( -\Delta z + {\sigma} z - g \right) \left( z-{\psi} \right) \d x = 0.
\]
Since $(-\Delta z + {\sigma} z - g) (z-{\psi})$ is non-negative a.e.~in $\Omega$, we deduce that
\begin{equation}\label{eq:var3}
(-\Delta z + {\sigma} z - g) (z-{\psi}) = 0 \quad \mbox{ a.e.~in } \ \Omega.
\end{equation}
Let $\xi = \Delta z - {\sigma} z + g \in L^2(\Omega)$. Then $\xi \geq 0$ a.e.~in $\Omega$, and by \eqref{eq:var3}, for a.e.~$x \in \Omega$, $\xi(x)=0$ if $z(x)<\psi(x)$. Hence we have $\xi \in \dind (z-\psi)$. Therefore we conclude that $z$ is a solution to \eqref{eq:static}.

We next prove the inverse. Let $z \in X \cap V$ be a strong solution to \eqref{eq:static}. Then we immediately have $z \in K$. Moreover, by virtue of \eqref{eq2:static}, we find that
\begin{align*}
J(w) - J(z) &\geq \int_\Omega \left( -\Delta z + {\sigma} z -g \right) \left( w-z \right) \d x\\
&= \int_\Omega \left( -\Delta z + {\sigma} z -g \right) \left( w-\psi \right) \d x - \int_\Omega \left( -\Delta z + {\sigma} z - g \right) \left( z-\psi \right) \d x\\
&= \int_\Omega \left( -\Delta z + {\sigma} z -g \right) \left( w-\psi \right) \d x \geq 0 \quad \mbox{ for } \ w \in K.
\end{align*}
Therefore $z \in X \cap K$ minimizes $J$ over $K$.

Combining all these facts, one can assure that there exists a unique strong solution to \eqref{eq:static} satisfying \eqref{eq:ls} and \eqref{eq:comp}.
\end{proof}

\subsection{A priori estimates}\label{ssec:bdd}

Owing to Proposition \ref{pro:static}, for each $k=1,\dots,m$, the discretized equation \eqref{eq:dis} admits a unique strong solution $z_k \in X \cap V$ satisfying \eqref{eq:ls} with $\psi = z_{k-1}$ and $g = f_k$. We can then define the \emph{piecewise linear interpolants} $\ztau$ and $\ftau$ of $\{z_k\}_{k=0,\ldots,m}$ and $\{f_k\}_{k=0,\ldots,m}$ by
\begin{alignat}{4}
\ztau(x,t) &:= z_{k-1}(x) + \frac{t-t_{k-1}}{\tau} \left( z_k(x)-z_{k-1}(x) \right) & \quad &\mbox{ for } \ x \in \Omega, \quad t_{k-1} < t \leq t_k,\label{def:ztau}\\
\ztau(x,0) &:= z_0(x) &&\mbox{ for } \ x \in \Omega,\notag\\
\ftau(x,t) &:= f_{k-1}(x) + \frac{t-t_{k-1}}{\tau} \left( f_k(x)-f_{k-1}(x) \right) &&\mbox{ for } \ x \in \Omega, \quad t_{k-1} < t \leq t_k,\label{def:ftau}\\
\ftau(\cdot, 0) &:= f_0 \; \in V^*,\notag
\end{alignat}
respectively, for $k=1,\dots,m$, and define the \emph{piecewise constant interpolants} $\ztaubar$ and $\ftaubar$ of $\{z_k\}_{k=1,\ldots,m}$ and $\{f_k\}_{k=1,\ldots,m}$ by
\begin{alignat}{4}
\ztaubar(x,t) &:= z_k(x) & \quad &\mbox{ for } \ x \in \Omega, \quad t_{k-1} < t \leq t_k,\label{def:ztaubar}\\
\ftaubar(x,t) &:= f_k(x) &&\mbox{ for } \ x \in \Omega, \quad t_{k-1} < t \leq t_k,\label{def:ftaubar}
\end{alignat}
respectively, for $k=1,\dots,m$.
Then it follows that
\begin{alignat*}{4}
\ztau &\in W^{1,2} (0,T;X \cap V), & \ \quad \ &\ztaubar &&\in L^\infty (0,T;X \cap V),\\
\ftau &\in W^{1,2} (0,T;V^*), &&\ftaubar &&\in L^\infty (0,T;L^2(\Omega))
\end{alignat*}
for each $ \tau $. Moreover,
\eqref{eq:dis} is rephrased as
\begin{equation*}
\left\lbrace
\begin{alignedat}{4}
&\dind \left( \partial_t \ztau \right) - \Delta \ztaubar + \sigma \ztaubar \ni \ftaubar & \quad &\mbox{ in } \ \Omega \times (0,T),\\
&\ztau = \ztaubar = 0 &&\mbox{ on } \ \ddom \times (0,T),\\
&\partial_\nu \ztau = \partial_\nu \ztaubar = 0 & \quad &\mbox{ on } \ \ndom \times (0,T).
\end{alignedat}
\right.
\end{equation*}

We shall establish a priori estimates for $(\ztau)$ and $(\ztaubar)$.

\begin{lem}\label{lem:ztau}
There exists a constant $C>0$ independent of $\tau$ and $T$ such that
\begin{equation}\label{eq:ztau}
\left\| \partial_t \ztau \right\|_{L^2(0,T;V)} \leq C \left\| \partial_t f \right\|_{L^2(0,T;V^*)} \quad \mbox{ for } \ \tau \in (0,1).
\end{equation}
Moreover, $(\ztau)$ is also bounded in $L^\infty(0,T;V)$ for $\tau\in(0,1)$.
\end{lem}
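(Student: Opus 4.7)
The plan is to difference the discrete equations \eqref{eq:dis} in time and test with the time increment $w_k := z_k - z_{k-1}$, exploiting the sign properties of $\partial I_{(-\infty,0]}$. For $k = 1,\ldots,m$, set $\xi_k := \Delta z_k - \sigma z_k + f_k \in L^2(\Omega)$; then $\xi_k \geq 0$ and $\xi_k w_k = 0$ a.e.~in $\Omega$. For $k = 0$, the assumption (iii) of Theorem \ref{thm:main} allows us to define $\xi_0 := f_0 + \Delta z_0 - \sigma z_0 \in V^*$ satisfying $\xi_0 \geq 0$ in $V^*$. Subtracting the equation at level $k-1$ from that at level $k$ and pairing with $w_k \in V$ gives
\[
\langle \xi_k - \xi_{k-1}, w_k \rangle_V + \|\nabla w_k\|_{L^2(\Omega)}^2 + \sigma \|w_k\|_{L^2(\Omega)}^2 = \langle f_k - f_{k-1}, w_k \rangle_V.
\]
The first term is non-negative: for $k \geq 2$, pointwise $(\xi_k - \xi_{k-1}) w_k = -\xi_{k-1} w_k \geq 0$ since $\xi_{k-1}\geq 0$ and $w_k \leq 0$ a.e.; for $k = 1$, $\langle \xi_1, w_1 \rangle_V = \int_\Omega \xi_1 w_1\,\d x = 0$ by complementarity, and $\langle \xi_0, w_1 \rangle_V \leq 0$ because $\xi_0 \geq 0$ in $V^*$ and $w_1 \leq 0$ a.e.~in $\Omega$.

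Discarding this non-negative contribution and invoking the coercivity $\|u\|_V^2 \leq C (\|\nabla u\|_{L^2(\Omega)}^2 + \sigma\|u\|_{L^2(\Omega)}^2)$ on $V$ --- which holds via Poincar\'e's inequality when $\H^{N-1}(\ddom) > 0$ and directly when $\sigma > 0$ (assumption (i) of Theorem \ref{thm:main}) --- one obtains $\|w_k\|_V \leq C \|f_k - f_{k-1}\|_{V^*}$ for all $k = 1,\ldots,m$. Representing $f_k - f_{k-1}$ as a double integral of $\partial_t f$ via \eqref{eq:defoffk}--\eqref{eq:defoff0} and applying the Cauchy--Schwarz inequality yields $\|f_k - f_{k-1}\|_{V^*}^2 \leq \tau \int_{J_k} \|\partial_t f(r)\|_{V^*}^2\,\d r$, where $J_k \subset (0,T)$ is an interval of length at most $2\tau$. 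Since $\int_0^T \|\partial_t z_\tau\|_V^2\,\d t = \sum_{k=1}^m \|w_k\|_V^2/\tau$, summing and noting that the intervals $\{J_k\}$ have overlap bounded by $2$ produces
\[
\|\partial_t z_\tau\|_{L^2(0,T;V)}^2 \leq C \|\partial_t f\|_{L^2(0,T;V^*)}^2,
\]
which is \eqref{eq:ztau} with a constant independent of both $\tau$ and $T$. The $L^\infty(0,T;V)$-bound then follows from the representation $z_\tau(t) = z_0 + \int_0^t \partial_t z_\tau(s)\,\d s$ together with Cauchy--Schwarz in time.

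The only delicate point in this scheme is the treatment of the initial step $k = 1$: because $f_0 \in V^*$ in general (rather than $L^2(\Omega)$), the corresponding ``multiplier'' $\xi_0$ lives only in $V^*$ and the subdifferential inclusion must be interpreted in duality. The essential ingredient is assumption (iii) of Theorem \ref{thm:main} (cf.~Remark \ref{rem:ex}(ii)), which is precisely what secures the sign $\xi_0 \geq 0$ in $V^*$ and hence the inequality $\langle \xi_0, w_1 \rangle_V \leq 0$ driving the argument. All other manipulations at $k = 1$ are compatible with $V^*$--$V$ duality, so no further complications arise.
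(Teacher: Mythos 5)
Your proposal is correct and follows essentially the same route as the paper: subtract consecutive discrete equations, test with the increment $z_k-z_{k-1}$, use $\eta_{k-1}\ge 0$ together with $z_k\le z_{k-1}$ (and assumption (iii) for $k=1$) to drop the monotone term, and then sum the resulting estimate $\|w_k\|_V\le C\|f_k-f_{k-1}\|_{V^*}$ via the double-integral representation of $f_k-f_{k-1}$ (which is exactly the paper's Proposition \ref{pro:inter}). The only cosmetic difference is that the paper divides by $\tau$ before testing, while you test with $w_k$ directly; your handling of the $k=1$ step in $V^*$-duality is exactly what the paper intends.
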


\begin{proof}
For each $k=1, \dots, m$, set $ \eta_k := \Delta z_k - \sigma z_k + f_k \in L^2(\Omega) $. Then we see that
\begin{equation}\label{eq:etak}
\eta_k \in \dind \left( \frac{z_k-z_{k-1}}{\tau} \right) \quad \mbox{ a.e.~in } \ \Omega
\end{equation}
for $ k=1,\dots,m $.
Moreover, set $\eta_0 := \Delta z_0 -\sigma z_0 + f_0 \geq 0$ (see \eqref{ass:ini}). By subtraction, we see that
\begin{equation}\label{eq:subtr}
\frac{\eta_k - \eta_{k-1}}{\tau} - \Delta \left( \frac{z_k - z_{k-1}}{\tau} \right) + \sigma \, \frac{z_k - z_{k-1}}{\tau} = \frac{f_k - f_{k-1}}{\tau}
\end{equation}
for each $k = 1,\ldots,m$. Testing \eqref{eq:subtr} by $\left( z_k-z_{k-1} \right) / \tau$ $\in V$, we have
\begin{align}
\mel{\int_\Omega \left(\frac{\eta_k-\eta_{k-1}}{\tau}\right) \left(\frac{z_k-z_{k-1}}{\tau} \right) \d x + \left\| \nabla \left( \frac{z_k-z_{k-1}}{\tau} \right) \right\|^2_{L^2(\Omega)} + \sigma \left\| \frac{z_k-z_{k-1}}{\tau} \right\|^2_{L^2(\Omega)}} \notag\\
&= \int_\Omega \left(\frac{f_k - f_{k-1}}{\tau}\right)\left(\frac{z_k-z_{k-1}}{\tau}\right) \d x \label{eq:diff}
\end{align}
for $k=1,\dots,m$.
By virtue of \eqref{eq:etak} and $\eta_0 \geq 0$ a.e.~in $ \Omega $, we can deduce that
\[
\int_\Omega \left(\frac{\eta_k-\eta_{k-1}}{\tau}\right) \left(\frac{z_k-z_{k-1}}{\tau}\right) \d x = -\int_\Omega \left(\frac{\eta_{k-1}}{\tau}\right) \left(\frac{z_k-z_{k-1}}{\tau}\right) \d x \geq 0
\]
for $k = 1,\ldots,m$.
Indeed, we observe that $ \eta_k (z_k - z_{k-1}) = 0 $, $ \eta_{k-1} \geq 0 $ and $ z_k \leq z_{k-1} $ a.e.~in $ \Omega $.
Therefore (with the aid of Poincar\'{e}'s inequality when $\sigma=0$) we derive from \eqref{eq:diff} that
\begin{equation}\label{eq:zkdiff}
\left\| \frac{z_k-z_{k-1}}{\tau} \right\|^2_V \leq C \left\| \frac{f_k-f_{k-1}}{\tau} \right\|^2_{V^*} \quad \mbox{ for } \ k=1,\dots,m,
\end{equation}
where $C$ is a positive constant independent of $k$, $\tau$ and $T$. Moreover, one can observe that
\begin{equation}\label{est:inter}
\tau \sum^m_{k=1} \left\| \frac{f_k-f_{k-1}}{\tau} \right\|^2_{V^*} \leq 4 \int^T_0 \left\| \partial_t f(s) \right\|^2_{V^*} \d s.
\end{equation}
For the convenience of the reader, we shall give a proof for \eqref{est:inter} in Proposition \ref{pro:inter} of Appendix \ref{sec:intpl}. Noting that $\partial_t \ztau(t) = (z_k - z_{k-1}) / \tau$ for $t \in (t_{k-1}, t_k)$, we deduce from \eqref{eq:zkdiff} and \eqref{est:inter} that
\[
\left\| \partial_t \ztau \right\|^2_{L^2(0,T;V)} = \tau \sum^m_{k=1} \left\| \frac{z_k-z_{k-1}}{\tau} \right\|^2_{V} \leq C \tau \sum^m_{k=1} \left\| \frac{f_k - f_{k-1}}{\tau} \right\|^2_{V^*} \leq C \left\| \partial_t f \right\|^2_{L^2(0,T;V^*)}.
\]
Thus \eqref{eq:ztau} follows. Furthermore, the boundedness of $(\partial_t \ztau)$ in $L^2(0,T;V)$ along with $\ztau(0) = z_0$ implies that $(\ztau)$ is bounded in $L^\infty(0,T;V)$; indeed, it follows that
\[
\left\| \ztau(t) \right\|_V \leq \int^t_0 \left\| \partial_t \ztau(s) \right\|_V \d s + \left\| z_0 \right\|_V \leq T^{1/2} \left\| \partial_t \ztau \right\|_{L^2(0,T;V)} + \left\| z_0 \right\|_V
\]
for $t \in [0,T]$ and $\tau \in (0,1)$.
This completes the proof.
\end{proof}

We further prove that

\begin{lem}\label{lem:est2}
There exists a constant $C > 0$ independent of $\tau \in (0,1)$ such that
\begin{equation}\label{eq:ztaubar}
\left\| \ztaubar \right\|_{L^\infty(0,T; V)} + \left\| \ztaubar \right\|_{L^2(0,T;X)} + \|z_\tau\|_{L^2(0,T;X)} \leq C.
\end{equation}
In particular, it holds that
\begin{equation}\label{eq:pointwise}
\left\| -\Delta \ztaubar(t) + \sigma \ztaubar(t) \right\|^2_{L^2(\Omega)} \leq 2 \left( \| \hat{f} \wedge (-\Delta z_0 + \sigma z_0) \|^2_{L^2(\Omega)} + \| \ftaubar(t) \|^2_{L^2(\Omega)} \right)
\end{equation}
for a.e.~$ t \in (0,T) $ and all $ \tau \in (0,1) $.
\end{lem}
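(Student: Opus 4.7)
The plan is to iterate the Lewy--Stampacchia inequality \eqref{eq:ls} along the discrete scheme \eqref{eq:dis} to obtain \eqref{eq:pointwise}, and then invoke the elliptic regularity assumption \eqref{eq:ass} to upgrade the resulting $L^2$-control on $-\Delta(\cdot)+\sigma(\cdot)$ into an $H^2$-bound.

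First I would apply Proposition \ref{pro:static} with $\psi=z_{k-1}$ and $g=f_k$, which gives for each $k=1,\dots,m$,
\begin{equation*}
f_k\wedge(-\Delta z_{k-1}+\sigma z_{k-1})\;\le\;-\Delta z_k+\sigma z_k\;\le\; f_k\quad\text{a.e.\ in }\Omega.
\end{equation*}
By assumption (ii) of Theorem \ref{thm:main}, $\hat f\le f$ a.e., and averaging on each $(t_{k-1},t_k)$ yields $\hat f\le f_k$ for every $k\ge 1$. An induction on $k$ then shows
\begin{equation*}
\hat f\wedge(-\Delta z_0+\sigma z_0)\;\le\;-\Delta z_k+\sigma z_k\;\le\; f_k\quad\text{a.e.\ in }\Omega\text{ for all }k\ge 1:
\end{equation*}
the base case $k=1$ follows directly from the Lewy--Stampacchia bound, and in the inductive step one combines $f_k\ge\hat f\ge\hat f\wedge(-\Delta z_0+\sigma z_0)$ with the induction hypothesis on $z_{k-1}$. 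The pointwise inequality $|u|^2\le 2(|a|^2+|b|^2)$ whenever $a\le u\le b$, applied with $u=-\Delta\bar z_\tau(t)+\sigma\bar z_\tau(t)$, then yields \eqref{eq:pointwise}.

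Next I would integrate \eqref{eq:pointwise} in time. The discrete Jensen estimate $\tau\|f_k\|_{L^2(\Omega)}^2\le\int_{t_{k-1}}^{t_k}\|f(s)\|_{L^2(\Omega)}^2\,\d s$ summed over $k$ gives $\|\bar f_\tau\|_{L^2(0,T;L^2(\Omega))}\le\|f\|_{L^2(0,T;L^2(\Omega))}$, whence
\begin{equation*}
\|-\Delta\bar z_\tau+\sigma\bar z_\tau\|_{L^2(0,T;L^2(\Omega))}\le C
\end{equation*}
with $C$ independent of $\tau$. Lemma \ref{lem:ztau} supplies the $L^\infty(0,T;V)$-bound on $z_\tau$, so the nodal values $(z_k)$ are uniformly bounded in $V$, whence the same bound holds for $\bar z_\tau$. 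Writing $B\bar z_\tau=(-\Delta\bar z_\tau+\sigma\bar z_\tau)+(1-\sigma)\bar z_\tau$ shows that $B\bar z_\tau$ is bounded in $L^2(0,T;L^2(\Omega))$, and by \eqref{eq:ass} (together with the closed graph theorem producing $\|B^{-1}w\|_{H^2(\Omega)}\le C\|w\|_{L^2(\Omega)}$) I obtain $\|\bar z_\tau\|_{L^2(0,T;X)}\le C$.

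Finally, on $(t_{k-1},t_k]$ the piecewise linear interpolant $z_\tau$ is a convex combination of $z_{k-1}$ and $z_k$, so $-\Delta z_\tau+\sigma z_\tau$ is the same convex combination of $-\Delta z_k+\sigma z_k$ and $-\Delta z_{k-1}+\sigma z_{k-1}$. A triangle inequality together with the summation $\sum_{k=1}^{m}\tau\|f_k\|_{L^2(\Omega)}^2\le\|f\|_{L^2(0,T;L^2(\Omega))}^2$ and the initial bound $\|-\Delta z_0+\sigma z_0\|_{L^2(\Omega)}<\infty$ (from $z_0\in X\cap V$) yields the analogous $L^2(0,T;L^2(\Omega))$-bound on $-\Delta z_\tau+\sigma z_\tau$, and repeating the elliptic regularity argument gives $\|z_\tau\|_{L^2(0,T;X)}\le C$. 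The only nontrivial step is the propagation of the envelope in the Lewy--Stampacchia iteration; once that is in place, the $H^2$-bounds are a routine consequence of \eqref{eq:ass} and the closed graph theorem.
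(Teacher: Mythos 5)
Your proposal is correct and follows essentially the same approach as the paper: iterate the Lewy--Stampacchia estimate of Proposition~\ref{pro:static} together with assumption (ii) to establish \eqref{eq:pointwise}, combine this with the $L^\infty(0,T;V)$-bound from Lemma~\ref{lem:ztau} and the elliptic regularity \eqref{eq:ass} to obtain the $L^2(0,T;X)$-bound, and exploit convexity of the piecewise linear interpolant to handle $\|z_\tau\|_{L^2(0,T;X)}$. The only cosmetic differences are your explicit invocation of $B^{-1}$ and the closed graph theorem where the paper leaves the passage from the $\Delta$-bound to the $H^2$-bound implicit, and your direct discrete Jensen estimate for $\ftaubar$ where the paper cites Proposition~\ref{pro:f}.
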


\begin{proof}
First we shall show the boundedness of $(\ztaubar)$ in $L^\infty(0,T;V)$. To this end, we observe that
\begin{align*}
\left\| \ztau(t) - \ztaubar(t) \right\|_V &= \left\| z_{k-1} + \frac{t-t_{k-1}}{\tau} \left( z_k - z_{k-1} \right) - z_k \right\|_V\\
&= \left| t-t_k \right| \left\| \frac{z_k-z_{k-1}}{\tau} \right\|_V\\
&\hspace{-2.1truemm}\stackrel{\text{\eqref{eq:zkdiff}}}{\leq} C \tau \left\| \frac{f_k-f_{k-1}}{\tau} \right\|_{V^*} \stackrel{\text{\eqref{est:inter}}}{\leq} C \tau^{1/2} \left\| \partial_t f \right\|_{L^2(0,T;V^*)}
\end{align*}
for each $t \in (t_{k-1}, t_k]$ and $k = 1,\ldots,m$.
Hence we infer that
\begin{equation}\label{eq:intpldif}
\left\| \ztau - \ztaubar \right\|_{L^\infty(0,T;V)} \to 0 \quad \mbox{ as } \ \tau \to 0_+. 
\end{equation}
In particular, since $(\ztau)$ is bounded in $L^\infty(0,T;V)$ by virtue of Lemma \ref{lem:ztau}, so is $(\ztaubar)$.

We shall next estimate $\Delta \ztaubar$ by making use of the Lewy-Stampacchia inequality,
\begin{equation}\label{eq:lszk}
f_k \wedge \left( -\Delta z_{k-1} + \sigma z_{k-1} \right) \leq -\Delta z_k + \sigma z_k \leq f_k \quad \mbox{ a.e.~in } \ \Omega,
\end{equation}
which is derived from \eqref{eq:ls} with $\psi = z_{k-1}$ and $g = f_k$ for each $k=1,\dots,m$. Hence using the assumption (ii) of Theorem \ref{thm:main}, we deduce from \eqref{eq:lszk} that 
\begin{align*}
f_k \geq -\Delta z_k + \sigma z_k &\geq f_k \wedge \left( -\Delta z_{k-1} + \sigma z_{k-1} \right)\\
&\geq f_k \wedge \big\{ f_{k-1} \wedge \left( -\Delta z_{k-2} + \sigma z_{k-2} \right) \big\}\\
&\geq f_k \wedge f_{k-1} \wedge \dots \wedge f_1 \wedge \left( -\Delta z_0 + \sigma z_0 \right)\\
&\geq \hat{f} \wedge \left( -\Delta z_0 + \sigma z_0 \right) \notag,
\end{align*}
which yields
\[
\left| -\Delta z_k + \sigma z_k \right| \leq | \hat{f} \wedge \left( -\Delta z_0 + \sigma z_0 \right)| + |f_k|.
\]
Thus we obtain
\begin{align*}
\left\| -\Delta \ztaubar(t) + \sigma \ztaubar(t) \right\|^2_{L^2(\Omega)} &\leq 2 \left( \| \hat{f} \wedge \left( -\Delta z_0 + \sigma z_0 \right) \|^2_{L^2(\Omega)} + \| \ftaubar(t) \|^2_{L^2(\Omega)} \right)
\end{align*}
for a.e.~$ t \in (0,T) $ and all $ \tau>0 $. Hence \eqref{eq:pointwise} is proved.
Moreover, it also follows that
\begin{align*}
\mel{\left\| \Delta \ztaubar \right\|^2_{L^2(0,T;L^2(\Omega))}}\\
&\leq C \left( T \| \hat{f} \|^2_{L^2(\Omega)} + T \left\| -\Delta z_0 + \sigma z_0 \right\|^2_{L^2(\Omega)} + \sigma^2 \left\| \ztaubar \right\|^2_{L^2(0,T;L^2(\Omega))} + \| \ftaubar \|^2_{L^2(0,T;L^2(\Omega))} \right),
\end{align*}
where $ C $ is a positive constant independent of $ \tau $ and $ T $.
Noting that
\begin{align*}
\sigma^2 \left\| \ztaubar \right\|^2_{L^2(0,T;L^2(\Omega))} + \| \ftaubar \|^2_{L^2(0,T;L^2(\Omega))} \leq T \sigma^2 \left\| \ztaubar \right\|_{L^\infty(0,T;L^2(\Omega))}^2 + \| \ftaubar \|^2_{L^2(0,T;L^2(\Omega))} \leq C
\end{align*}
(see also Proposition \ref{pro:f} in Appendix \ref{sec:intpl} for the boundedness of $ (\ftaubar) $),
we obtain
\[
\left\| \Delta \ztaubar \right\|^2_{L^2(0,T;L^2(\Omega))} \leq C.
\]
Furthermore, we find by convexity that
\[
\left\| \Delta z_\tau(t) \right\|_{L^2(\Omega)}^2 \leq \frac{t_k-t}{\tau} \left\| \Delta z_{k-1} \right\|_{L^2(\Omega)}^2 + \frac{t-t_{k-1}}{\tau} \left\| \Delta z_k \right\|_{L^2(\Omega)}^2 \quad \mbox{ for a.e.~} t \in (t_{k-1},t_k),
\]
which yields
\[
\int^{t_k}_{t_{k-1}} \|\Delta z_\tau(t)\|_{L^2(\Omega)}^2 \, \d t \leq \frac{\tau}2 \|\Delta z_{k-1}\|_{L^2(\Omega)}^2 + \frac{\tau}2 \|\Delta z_k\|_{L^2(\Omega)}^2.
\]
It follows that
\begin{align*}
\int^T_0 \|\Delta z_\tau(t)\|_{L^2(\Omega)}^2 \, \d t &= \sum_{k=1}^m \int^{t_k}_{t_{k-1}} \|\Delta z_\tau(t)\|_{L^2(\Omega)}^2 \, \d t\\
&\leq \frac{\tau}2 \sum_{k=1}^m \|\Delta z_{k-1}\|_{L^2(\Omega)}^2 + \frac{\tau}2 \sum_{k=1}^m\|\Delta z_k\|_{L^2(\Omega)}^2\\
&\leq \frac{\tau}2 \|\Delta z_0\|_{L^2(\Omega)}^2 + \int^T_0 \|\Delta \ztaubar(t)\|_{L^2(\Omega)}^2 \, \d t \leq C.
\end{align*}
This completes the proof.
\end{proof}

Combining Lemmas \ref{lem:ztau} and \ref{lem:est2}, we can derive

\begin{lem}\label{lem:strong}
The sequence of the piecewise linear interpolants $(\ztau)$ is relatively compact in $C([0,T];V)$.
\end{lem}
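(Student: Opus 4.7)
The strategy is to derive relative compactness in $L^2(0,T;V)$ by an Aubin-Lions-Simon argument, and then to upgrade this to relative compactness in $C([0,T];V)$ by exploiting the uniform equicontinuity of $(\ztau)$ with values in $V$ inherited from the $L^2$-in-time bound on $\partial_t \ztau$.

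First I would apply the Aubin-Lions-Simon compactness theorem with the triple $(X \cap V, V, V)$. Note that $\ztau(t)$ lies in $X \cap V$ for every $t \in [0,T]$, since by \eqref{def:ztau} it is a convex combination of $z_{k-1}$ and $z_k$, both of which belong to $X \cap V$ by Proposition \ref{pro:static}. Lemma \ref{lem:est2} then provides a uniform-in-$\tau$ bound on $(\ztau)$ in $L^2(0,T; X \cap V)$, and Lemma \ref{lem:ztau} yields a uniform-in-$\tau$ bound on $(\partial_t \ztau)$ in $L^2(0,T;V)$. Since $X \cap V$ is compactly embedded in $V$ (because $H^2(\Omega)$ is compactly embedded in $H^1(\Omega)$ on the bounded Lipschitz domain $\Omega$ by the Rellich-Kondrachov theorem), the Aubin-Lions-Simon theorem yields that $(\ztau)$ is relatively compact in $L^2(0,T;V)$.

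Second, from any given subsequence of $(\ztau)$ I would extract, by the previous step, a further subsequence (not relabeled) such that $\ztau \to z$ strongly in $L^2(0,T;V)$ for some limit $z \in L^2(0,T;V)$. Passing if necessary to a further subsequence, one may in addition assume $\ztau(t) \to z(t)$ strongly in $V$ for a.e.~$t \in [0,T]$.

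Third, I would invoke the uniform equicontinuity of $(\ztau)$ with values in $V$. Indeed, Lemma \ref{lem:ztau} together with the Cauchy-Schwarz inequality gives
\[
\|\ztau(t)-\ztau(s)\|_V \;\le\; \int_s^t \|\partial_t \ztau(r)\|_V \, \d r \;\le\; |t-s|^{1/2} \, \|\partial_t \ztau\|_{L^2(0,T;V)} \;\le\; C|t-s|^{1/2}
\]
for all $s,t \in [0,T]$, uniformly in $\tau \in (0,1)$. Combining this H\"older-$1/2$ equicontinuity with the a.e.~pointwise convergence $\ztau(t) \to z(t)$ in $V$, a standard argument (equicontinuity plus pointwise convergence on a dense subset of $[0,T]$ forces uniform convergence on all of $[0,T]$) upgrades the convergence to $\ztau \to z$ in $C([0,T];V)$. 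Since every subsequence of $(\ztau)$ admits a sub-subsequence convergent in $C([0,T];V)$, the relative compactness of $(\ztau)$ in $C([0,T];V)$ follows.

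The conceptual point is recognizing that the standard Aubin-Lions framework delivers compactness only in $L^2(0,T;V)$, while the $C$-in-time upgrade is essentially free thanks to the uniform H\"older-$1/2$ equicontinuity from $\partial_t \ztau \in L^2(0,T;V)$. No individual step should be particularly difficult, as the substantive a priori estimates have already been established in Lemmas \ref{lem:ztau} and \ref{lem:est2}; the lemma is really a packaging step preparing the passage to the limit $\tau \to 0_+$ carried out in the proof of Theorem \ref{thm:main}.
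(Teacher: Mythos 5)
Your proof is correct and rests on exactly the same ingredients as the paper's: the uniform H\"older-$1/2$ equicontinuity in $V$ coming from Lemma~\ref{lem:ztau}, the $L^2(0,T;X)$ bound from Lemma~\ref{lem:est2}, and the compact embedding $X\cap V \hookrightarrow V$. The only difference is one of packaging: the paper invokes Simon's compactness theorem in $C([0,T];V)$ directly, whereas you unpack it by first extracting an $L^2(0,T;V)$-convergent (hence a.e.\ pointwise-convergent) subsequence via Aubin--Lions--Simon and then upgrading to uniform convergence through equicontinuity, which is essentially the proof of the cited theorem specialized to this setting.
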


\begin{proof}
We first prove the equi-continuity of $(\ztau)$ in $C([0,T];V)$. Using Lemma \ref{lem:ztau}, one observes that
\begin{align*}
\left\| \ztau(t) - \ztau(s) \right\|_V &\leq \int^t_s \left\| \partial_t \ztau(r) \right\|_V \d r\\
&\leq \left| t-s \right|^{1/2} \left\| \partial_t \ztau \right\|_{L^2(0,T;V)} \leq C \left| t-s \right|^{1/2}
\end{align*}
for any $t ,s \in [0,T]$. Hence since $(\ztau)$ is bounded in $L^2(0,T;X)$ (see \eqref{eq:ztaubar} in Lemma \ref{lem:est2}) and $X$ is compactly embedded in $V$, using Theorem 3 of~\cite{Simon}, one can conclude that $(\ztau)$ is relatively compact in $C([0,T];V)$. This completes the proof.
\end{proof}

\subsection{Identification of weak limits}\label{ssec:id}

Thanks to Lemmas \ref{lem:ztau}, \ref{lem:est2} and \ref{lem:strong}, there exist a subsequence $(\tau')$ of $(\tau)$ and $z \in W^{1,2}(0,T;V) \cap L^2 (0,T;X)$ ($\subset C([0,T];V)$) such that
\begin{alignat*}{4}
z_{\tau'} &\to z & \quad &\mbox{ weakly in } \ W^{1,2}(0,T;V) \cap L^2(0,T;X),\\
& &&\mbox{ strongly in } \ C([0,T];V),\\
\overline{z}_{\tau'} &\to z &&\mbox{ weakly in } \ L^2 (0,T;X),\\
& &&\mbox{ strongly in } \ L^\infty(0,T;V)
\end{alignat*}
as $\tau' \to 0_+$. Here we used the fact from \eqref{eq:intpldif} that $\ztau-\ztaubar \to 0$ strongly in $L^\infty (0,T;V)$ as $\tau \to 0_+$. We are now in a position to prove the existence part of Theorem \ref{thm:main}.

\begin{proof}[Proof of Theorem {\rm \ref{thm:main}} {\rm (}The existence part\/{\rm )}]
We shall verify that $z$ satisfies \eqref{eq:A} as well as (i)--(iv) of Definition \ref{D:sol}.	We have already proved that $z \in W^{1,2}(0,T;V) \cap L^2(0,T;X)$, which in particular implies (i) and \eqref{eq:A}. By virtue of Lemma \ref{lem:strong}, we have
\begin{align*}
\left\| z(0)-z_0 \right\|_V &= \left\| z(0)-z_{\tau'}(0) \right\|_V \leq \sup_{t \in [0,T]} \left\| z(t) - z_{\tau'}(t) \right\|_V \to 0 \quad \mbox{ as } \ \tau' \to 0_+,
\end{align*}
which assures the initial condition (iv).

It still remains to check the conditions (ii) and (iii). Since $z_k$ is a strong solution to \eqref{eq:dis} for each $ k $, we deduce from \eqref{eq2:static} that
\begin{alignat}{4}
-\Delta \ztaubar + \sigma \ztaubar - \ftaubar \leq 0, \quad \partial_t \ztau &\leq 0 & \quad &\mbox{ a.e.~in } \ \Omega \times (0,T), \notag \\
\left( -\Delta \ztaubar + \sigma \ztaubar - \ftaubar \right) \partial_t \ztau &= 0 &&\mbox{ a.e.~in } \ \Omega \times (0,T). \label{eq:eqtau}
\end{alignat}
Note that
\begin{equation}\label{ftau-conv}
\ftaubar \to f \quad \mbox{ strongly in } \ L^2 (0,T;L^2(\Omega))
\end{equation}
as $\tau \to 0_+$ (see Proposition \ref{pro:f} in Appendix \ref{sec:intpl}). From the (weak) convergences obtained so far, it follows immediately that
\begin{equation}\label{eq:ineq}
-\Delta z + \sigma z - f \leq 0, \quad \partial_t z \leq 0 \quad \mbox{ a.e.~in } \ \Omega \times (0,T),
\end{equation}
which implies (ii). To prove (iii), we claim that
\[
0 \leq \int^T_0 \int_\Omega \left( \Delta z -\sigma z + f \right) \partial_t z \, \d x \, \d t.
\]
Indeed, recalling \eqref{def:ztau} and \eqref{def:ztaubar}, we observe that
\begin{align*}
\int^T_0 \int_\Omega \left( \Delta \overline{z}_{\tau} \right) \partial_t z_{\tau} \, \d x \,\d t 
&= \tau \sum^m_{k=1} \int_\Omega \Delta z_k \left( \frac{z_k - z_{k-1}}{\tau} \right) \d x\\
&\leq \sum^m_{k=1} \int_\Omega \left( -\frac{1}{2} \left| \nabla z_k \right|^2 + \frac{1}{2} \left| \nabla z_{k-1} \right|^2 \right) \d x\\
&= -\phi \left( z_{\tau}(T) \right) + \phi \left( z_0 \right),
\end{align*}
where $\phi : L^2(\Omega) \to (-\infty, +\infty]$ is the functional defined by \eqref{eq:phi}. Therefore, by the continuity of $\phi$ in $V$ and Lemma \ref{lem:phi}, since $z_{\tau'}(T) \to z(T)$ strongly in $V$, it follows that
\begin{align*}
\limsup_{\tau' \to 0_+} \int^T_0 \int_\Omega \left( \Delta \overline{z}_{\tau'} \right) \partial_t z_{\tau'} \, \d x \,\d t 
&\leq \limsup_{\tau' \to 0_+} \left( -\phi \left( z_{\tau'}(T) \right) + \phi (z_0) \right)\\
&= -\phi (z(T)) + \phi(z_0) = -\int^T_0 \frac{\d}{\d t} \, \phi(z(t)) \,\d t\\
&\hspace{-1.5truemm}\stackrel{\text{\eqref{eq:chain}}}{=} \int^T_0 \int_\Omega \left(\Delta z\right) \partial_t z \, \d x \, \d t.
\end{align*}
Moreover, we find from \eqref{ftau-conv} that
\[
\lim_{\tau' \to 0_+} \int^T_0 \int_\Omega \overline{f}_{\tau'} \partial_t z_{\tau'} \,\d x \,\d t = \int^T_0 \int_\Omega f \partial_t z \, \d x \, \d t.
\]
Furthermore, we see that
\begin{align*}
\sigma \int^T_0 \int_\Omega \overline{z}_{\tau'} \partial_t z_{\tau'} \, \d x \, \d t &= \sigma \sum^m_{k=1} \int_\Omega z_k \left( z_k - z_{k-1} \right) \d x \\
&\geq \frac{\sigma}{2} \sum^m_{k=1} \int_\Omega \left( \left| z_k \right|^2 -  \left| z_{k-1} \right|^2 \right) \d x\\
&= \frac{\sigma}{2} \int_\Omega \left| z_{\tau'}(T) \right|^2 \d x - \frac{\sigma}{2} \int_\Omega \left| z_0 \right|^2 \d x,
\end{align*}
which implies that
\begin{align*}
\mel{
\liminf_{\tau' \to 0_+} \left( \sigma \int^T_0 \int_\Omega \overline{z}_{\tau'}  \partial_t z_{\tau'} \, \d x \, \d t \right)
}\\
&\geq \frac{\sigma}{2} \int_\Omega \left| z(T) \right|^2 \d x - \frac{\sigma}{2} \int_\Omega \left| z_0 \right|^2 \d x\\
&= \frac{\sigma}{2} \int^T_0 \frac{\d}{\d t} \left( \int_\Omega \left| z(t) \right|^2 \d x \right) \d t = \sigma \int^T_0 \int_\Omega z \partial_t z \, \d x \, \d t.
\end{align*}
Combining all these facts, we obtain
\begin{align*}
0 &\stackrel{\eqref{eq:eqtau}}{=}\limsup_{\tau' \to 0_+} \int^T_0 \int_\Omega \left( \Delta \overline{z}_{\tau'} - \sigma \overline{z}_{\tau'} + \overline{f}_{\tau'} \right) \partial_t z_{\tau'} \, \d x \, \d t\\
&\hspace{2.1truemm}\leq \int^T_0 \int_\Omega \left( \Delta z -\sigma z + f \right) \partial_t z \, \d x \, \d t \stackrel{\eqref{eq:ineq}}\leq 0.
\end{align*}
Hence (iii) follows, for the integrand is non-positive a.e.~in $\Omega \times (0,T)$ (see \eqref{eq:ineq}). Thus the existence part of Theorem \ref{thm:main} has been proved.
\end{proof}

\subsection{Continuous dependence and uniqueness}\label{ssec:uniq}

In this subsection, in order to complete the proof of Theorem \ref{thm:main}, we shall prove uniqueness and continuous dependence on prescribed data $f$ and $z_0$ of strong solutions to the initial-boundary value problem.

\begin{proof}[Proof of Theorem {\rm \ref{thm:main}} {\rm (}The uniqueness part\/{\rm )}]
Let $z^j$ be a strong solution to \eqref{eq} (or \eqref{eq:vi}) and \eqref{bcic} with $z_0=z_0^j$ and $f=f^j$ for $j=1,2$.
Set $\eta^j = \Delta z^j - \sigma z^j + f^j \in \dind (\partial_t z^j)$ for $j=1,2$. By subtraction, we see that
\begin{alignat*}{4}
\eta^1 - \eta^2 - \Delta \left( z^1 - z^2 \right) + \sigma \left( z^1 - z^2 \right) &= f^1 - f^2 & \quad &\mbox{ a.e.~in } \ \Omega \times (0,T),\\
z^1 - z^2 &= 0 &&\mbox{ a.e.~on } \ \ddom \times (0,T),\\
\partial_\nu \left( z^1 - z^2 \right) &= 0 &&\mbox{ a.e.~on } \ \ndom \times (0,T),\\
z^1(0) - z^2(0) &= z^1_0 - z^2_0 &&\mbox{ a.e.~in } \ \Omega.
\end{alignat*}
Testing this by $\partial_t z^1(t) - \partial_t z^2(t)$, we have
\begin{align}
\mel{\int_\Omega \left( \eta^1(t)-\eta^2(t) \right) \left( \partial_t z^1(t)-\partial_t z^2(t) \right) \d x + \frac{1}{2} \frac{\d}{\d t} \left\| \nabla z^1(t) - \nabla z^2(t) \right\|^2_{L^2(\Omega)}} \notag \\
&+ \frac{\sigma}{2} \frac{\d}{\d t} \left\| z^1(t) - z^2(t) \right\|^2_{L^2(\Omega)} = \int_\Omega \left( f^1(t) - f^2(t) \right) \left( \partial_t z^1(t) - \partial_t z^2(t) \right) \d x \label{eq:eta1eta2}
\end{align}
for a.e.~$t \in (0,T)$ (see Lemma \ref{lem:phi} again). The first term of the left-hand-side is non-negative from the monotonicity of $\dind$. Hence, \eqref{eq:eta1eta2} yields
\begin{align*}
\mel{\frac{1}{2} \frac{\d}{\d t} \left( \left\| \nabla z^1(t) - \nabla z^2(t) \right\|^2_{L^2(\Omega)} + \sigma \left\| z^1(t)-z^2(t) \right\|^2_{L^2(\Omega)} \right)}\\
&\leq \int_\Omega \left( f^1(t) - f^2(t) \right) \left( \partial_t z^1(t) - \partial_t z^2(t) \right) \d x\\
&= \frac{\d}{\d t} \left\langle f^1(t)-f^2(t) , z^1(t)-z^2(t) \right\rangle_V - \left\langle \partial_t f^1(t) - \partial_t f^2(t), z^1(t)-z^2(t) \right\rangle_V.
\end{align*}
To verify the last equality, see Proposition \ref{pro:leibniz} in Appendix \ref{sec:leibniz}. Moreover, we have
\begin{align*}
\mel{\frac{1}{2} \left\| \nabla z^1(t) - \nabla z^2(t) \right\|^2_{L^2(\Omega)} + \frac{\sigma}{2} \left\| z^1(t)-z^2(t) \right\|^2_{L^2(\Omega)}}\\
&\leq \frac{1}{2} \left\| \nabla z^1_0 - \nabla z^2_0 \right\|^2_{L^2(\Omega)} + \frac{\sigma}{2} \left\| z^1_0-z^2_0 \right\|^2_{L^2(\Omega)}\\
&\quad + \left\langle f^1(t)-f^2(t) , z^1(t)-z^2(t) \right\rangle_V - \left\langle f^1(0)-f^2(0) , z^1_0-z^2_0 \right\rangle_V\\
&\quad - \int^t_0 \left\langle \partial_t f^1(s) - \partial_t f^2(s), z^1(s)-z^2(s) \right\rangle_V \, \d s
\end{align*}
for all $t \in [0,T]$. Then (with the aid of Poincar\'{e}'s inequality when $\sigma=0$) we obtain
\begin{align*}
\left\| z^1(t)-z^2(t) \right\|^2_V &\leq C \Big( \left\| z^1_0 - z^2_0 \right\|^2_V + \left\| f^1(t) - f^2(t) \right\|^2_{V^*} + \left\| f^1(0) - f^2(0) \right\|^2_{V^*}\\
&\quad + \int^t_0 \left\| \partial_t f^1(s) - \partial_t f^2(s) \right\|_{V^*} \left\| z^1(s)-z^2(s) \right\|_V \d s \Big)\\
&\leq C \Big( \left\| z^1_0 - z^2_0 \right\|^2_V + \sup_{t \in [0,T]} \left\| f^1(t)-f^2(t) \right\|^2_{V^*}\\
&\quad + \int^t_0 \left\| \partial_t f^1(s) - \partial_t f^2(s) \right\|_{V^*} \left\| z^1(s)-z^2(s) \right\|_V \d s \Big),
\end{align*}
which along with some variant of Gronwall's inequality (see Lemma A.5 of~\cite{HBF}) implies
\begin{align*}
\MoveEqLeft{
\sup_{t \in [0,T]} \left\| z^1(t)-z^2(t) \right\|_V
}\\
&\leq C \Big( \left\| z^1_0-z^2_0 \right\|_V + \sup_{t \in [0,T]} \left\| f^1(t)-f^2(t) \right\|_{V^*} + \left\| \partial_t f^1 - \partial_t f^2 \right\|_{L^1(0,T;V^*)} \Big).
\end{align*}
Here we note that $C$ may depend on $ \sigma $ and $ \Omega $ (via Poincar\'{e}'s inequality) but it is independent of $T$. Thus one can verify \eqref{conti-dep}, and in particular, uniqueness of strong solution follows. We have proved Theorem \ref{thm:main}.
\end{proof}

\section{Qualitative properties of strong solutions}\label{sec:qua}

In this section, we shall discuss qualitative properties of strong solutions $z=z(x,t)$ for \eqref{eq:vi} (or \eqref{eq}) and \eqref{bcic}. In particular, Theorem \ref{thm:qua} will be proved.

\subsection{Comparison principle}\label{ssec:comp}

We shall prove comparison principle for strong solutions to \eqref{eq:vi} (or \eqref{eq}), \eqref{bcic}.

\begin{prop}[Comparison principle]\label{pro:comp}
Assume that \eqref{eq:ass} is satisfied. Let $\sigma \geq 0$, $f^j \in L^2 (0,T;L^2(\Omega)) \cap W^{1,2} (0,T;V^*)$ and $z^j_0 \in X \cap V$ fulfill all the assumptions {\rm (i)--(iii)} of Theorem {\rm \ref{thm:main}} with $f = f^j$ and $z_0 = z_0^j$ for $j=1,2$, respectively. Let $z^j=z^j(x,t)$ be the strong solutions to \eqref{eq:vi} {\rm (}or equivalently \eqref{eq}{\rm )} and \eqref{bcic} with $f=f^j$ and $z_0=z_0^j$ for $ j=1,2 $. Assume that
\[
f^1 \leq f^2 \quad \mbox{ a.e.~in } \ \Omega \times (0,T) \quad \mbox{ and } \quad z^1_0 \leq z^2_0 \quad \mbox{ a.e.~in } \ \Omega.
\]
Then it holds that
\[
z^1 \leq z^2 \quad \mbox{ a.e.~in } \ \Omega \times (0,T).
\]
\end{prop}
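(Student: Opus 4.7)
The strategy I would adopt is to propagate the desired inequality through the minimizing movement scheme of Section \ref{sec:main}, relying on the discrete comparison principle \eqref{eq:comp} already supplied by Proposition \ref{pro:static}. The advantage of this route over a direct PDE argument (such as testing the subtracted equation with $(z^1 - z^2)_+$) is that the term $(\eta^1 - \eta^2)(z^1 - z^2)_+$ arising from the multipliers $\eta^j \in \dind(\partial_t z^j)$ carries no obvious sign, so a standard energy estimate stumbles at the outset; the discretized obstacle formulation, by contrast, is tailor-made for comparison.

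For each $j=1,2$, I would construct the time-discrete solutions $\{z_k^j\}_{k=0,\ldots,m}$ to \eqref{eq:dis} with forcing $f = f^j$ and initial datum $z_0^j$, with the averaged data $f_k^j$ defined as in \eqref{eq:defoffk}. Since $f^1 \leq f^2$ a.e.~on $\Omega \times (0,T)$, averaging on each subinterval $(t_{k-1},t_k)$ gives $f_k^1 \leq f_k^2$ a.e.~on $\Omega$. Because $\dind\bigl((z_k - z_{k-1})/\tau\bigr) = \partial I_{(-\infty, z_{k-1}]}(z_k)$ (both depend only on the sign of $z_k - z_{k-1}$), the $k$-th discrete equation coincides with the obstacle problem \eqref{eq:static} for $\psi = z_{k-1}^j$ and $g = f_k^j$. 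I would then argue by induction on $k$: the base case $k=0$ is the hypothesis $z_0^1 \leq z_0^2$; and if $z_{k-1}^1 \leq z_{k-1}^2$, then applying the comparison principle \eqref{eq:comp} of Proposition \ref{pro:static} with obstacles $z_{k-1}^1 \leq z_{k-1}^2$ and sources $f_k^1 \leq f_k^2$ yields $z_k^1 \leq z_k^2$ a.e.~on $\Omega$, closing the induction.

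It then follows that the piecewise linear interpolants satisfy $z_\tau^1 \leq z_\tau^2$ a.e.~on $\Omega \times (0,T)$ for every $\tau \in (0,1)$. By the existence argument in \S \ref{ssec:id} combined with the uniqueness from Theorem \ref{thm:main}, the entire family $(z_\tau^j)$ converges strongly to $z^j$ in $C([0,T];V)$ as $\tau \to 0_+$; since a.e.~inequalities are preserved under strong $L^2$-convergence (up to passing to a subsequence realizing pointwise a.e.~convergence), passing to the limit gives $z^1 \leq z^2$ a.e.~in $\Omega \times (0,T)$. I expect no serious obstacle: all the analytical work is already embedded in Proposition \ref{pro:static} (the discrete obstacle comparison) and in the convergence of the minimizing movement scheme established in Section \ref{sec:main}. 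The only item requiring careful verification is the identification of the inclusion in \eqref{eq:dis} with the obstacle problem \eqref{eq:static} at each discrete step, which is the single conceptual ingredient that turns an apparently rigid differential inclusion into a scheme amenable to order-preservation.
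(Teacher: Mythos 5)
Your proposal is correct and follows essentially the same path as the paper's own proof: reduce to the discrete scheme, use the monotone-in-data comparison of Proposition~\ref{pro:static} inductively at each time step, deduce $z_\tau^1 \leq z_\tau^2$ for the interpolants, and pass to the limit using the strong $C([0,T];V)$ convergence together with uniqueness of the strong solution. The observations you add (why the direct $(z^1-z^2)_+$ test fails, and the identification $\dind\bigl((z_k-z_{k-1})/\tau\bigr) = \partial I_{(-\infty,z_{k-1}]}(z_k)$) are accurate and simply make explicit points the paper leaves implicit.
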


\begin{proof}
From the proof of Theorem \ref{thm:main}, for each $j=1,2$, the unique strong solution $z^j=z^j(x,t)$ is obtained as a limit in $C([0,T];V)$ of the piecewise linear interpolant $z^j_\tau$ of solutions $\{z^j_k\}_{k=0,\ldots,m}$ to the corresponding discretized problems \eqref{eq:dis}. Since one can check easily that $f^1_k \leq f^2_k$ for $k=1,2,\dots,m$, where $\{f^j_k\}_{k=1,\ldots,m}$ is defined by \eqref{eq:defoffk} for $f=f^j$, applying Proposition \ref{pro:static} iteratively, we may deduce that
\[
z^1_k \leq z^2_k \quad \mbox{ a.e.~in } \ \Omega \quad \mbox{ for } \ k=1,\dots,m.
\]
Indeed, for the $k$-th step of the iteration, all the assumptions of Proposition \ref{pro:static} are satisfied with $g=f^j_k \in L^2(\Omega)$ and $\psi=z^j_{k-1} \in X \cap V$ for $j=1,2$, provided that $ z^1_{k-1} \leq z^2_{k-1} $ a.e.~in $ \Omega $ (cf.~$ z^1_0 \leq z^2_0 $ by assumption). Therefore applying the comparison principle stated in Proposition \ref{pro:static}, we infer that
\[
z^1_\tau(t) \leq z^2_\tau(t) \quad \mbox{ a.e.~in } \ \Omega
\]
for all $t \in [0,T]$. Thus we can obtain the assertion by passing to the limit as $\tau \to 0_+$ and by recalling the fact that $z^j_\tau \to z^j$ strongly in $C([0,T];V)$ for $j=1,2$.
\end{proof}

\subsection{Three intrinsic properties}\label{ssec:qua}

We next give a proof of Theorem \ref{thm:qua}. 
\begin{proof}[Proof of Theorem {\rm \ref{thm:qua}}]
The irreversibility (i) follows immediately from \eqref{eq:vi} (or \eqref{eq}). Hence we shall prove (ii) and (iii) below. Let $z = z(x,t)$ be the unique strong solution to \eqref{eq:vi} (or \eqref{eq}), \eqref{bcic}. In particular, (for each representative) there exists a negligible set $N_0 \subset [0,T]$ (i.e., $|N_0| = 0$) such that $z$ satisfies
\[
-\Delta z(t) + \sigma z(t) -f(t) \leq 0 \quad \mbox{ a.e.~in } \ \Omega
\]
for all $t \in [0,T] \setminus N_0$. Let $t \in [0,T] \setminus N_0$ and let $v \in V$ be such that $v \leq z(t)$ a.e.~in $\Omega$. Then we see that
\begin{align*}
0 \leq \int_\Omega \left( -\Delta z(t) + \sigma z(t) - f(t) \right) \left( v-z(t) \right) \d x
\leq \E ( v,t ) - \E ( z(t),t ),
\end{align*}
where $ \E : V \times [0,T] \to \R $ is the energy functional defined by \eqref{eq:e}.
Furthermore, since the function $t \mapsto \E(z(t),t)$ is (absolutely) continuous on $[0,T]$ (see Lemma \ref{lem:phi}) and $ z \in C([0,T];V) $, we can verify that
\[
\E ( z(t),t ) \leq \E ( v,t ) \quad \mbox{ for any } \ t \in [0,T],
\]
provided that $v \in V$ and $v \leq z(t)$ a.e.~in $\Omega$.
Indeed, let $ t_0 \in [0,T] $ be fixed and let $ v \in V $ be such that $ v \leq z(t_0) $ a.e.~in $ \Omega $. One can take a subsequence $ (t_n) $ in $ (0,t_0) $ such that the above relation holds at each $ t=t_n $ and $ t_n \nearrow t_0 $. Then noting by (i) and $ t_n < t_0 $ that $ v \leq z(t_0) \leq z(t_n) $ a.e.~in $ \Omega $, we infer that
\[
\E(z(t_n),t_n) \leq \E(v,t_n) \quad \mbox{ for all } \ n \in \N.
\]
Hence employing the fact that $ z \in C([0,T];V) $ and $ f \in W^{1,2}(0,T;V^*) \subset C([0,T];V^*) $ and passing to the limit as $ n \to \infty $, we obtain the desired relation at $ t=t_0 $.
Thus (ii) follows.

As for (iii), differentiating the function $t \mapsto \E(z(t),t)$ and using Lemma \ref{lem:phi}, we have
\begin{align*}
\frac{\d}{\d t} \, \E ( z(t),t ) &= \int_\Omega \left( -\Delta z(t) + \sigma z(t) -f(t) \right) \partial_t z(t) \,\d x - \left\langle \partial_t f(t), z(t) \right\rangle_V \\
&= -\left\langle \partial_t f(t), z(t) \right\rangle_V
\end{align*}
for a.e.~$t \in (0,T)$. Here we also used (iii) of Definition \ref{D:sol}. Hence it follows that
\[
\E ( z(t),t ) - \E ( z(s),s ) = -\int^t_s \left\langle \partial_t f(r), z(r) \right\rangle_V \d r
\]
for all $t,s \in [0,T]$. The proof is completed.
\end{proof}

\section{Global well-posedness and long-time dynamics}\label{sec:global}

In this section, we shall first prove existence of global-in-time solutions for the initial-boundary value problem \eqref{eq:vi} (or \eqref{eq}), \eqref{bcic} (see Theorem \ref{thm:global}), and then, long-time behavior of global-in-time solutions will be discussed. More precisely, we shall verify that each global-in-time solution converges to a stationary limit as $t \to \infty$ under certain assumptions for $f$ (see Theorem \ref{thm:asympt}).

\begin{proof}[Proof of Theorem {\rm \ref{thm:global}}]
For each $T>0$, set $f_T := \left. f \right|_{(0,T)} \in L^2(0,T;L^2(\Omega)) \cap W^{1,2}(0,T;V^*)$. Then thanks to Theorem \ref{thm:main} there exists a strong solution $z_T \in W^{1,2}(0,T;V) \cap L^2(0,T;X)$ to \eqref{eq:vi}, \eqref{bcic} on $[0,T]$. From the uniqueness of strong solution, for any $T' > T$, $z_T$ coincides with $z_{T'}$ in $\Omega \times (0,T)$. Therefore one can define $z \in W^{1,2}_{\rm loc}(0,\infty;V) \cap L^2_\mathrm{loc}([0,\infty);X)$ as an extension of $(z_T)$ onto $[0,\infty)$.
Then, from the arbitrariness of $T>0$, the function $z = z(x,t)$ turns out to be a global-in-time strong solution to \eqref{eq:vi} (or \eqref{eq}) and \eqref{bcic}.
Moreover, estimate \eqref{eq:ztau} implies that
\[
\left\| \partial_t z_T \right\|_{L^2(0,T;V)} \leq C \left\| \partial_t f_T \right\|_{L^2(0,T;V^*)}
\]
for each $ T>0 $. Here we recall that the constant $C$ is independent of $T$. In addition, if $\partial_t f \in L^2(0,\infty;V^*)$, one can then deduce from the arbitrariness of $T$ that $\partial_t z \in L^2(0,\infty;V)$ and
\[
\left\| \partial_t z \right\|_{L^2(0,\infty;V)} \leq C \left\| \partial_t f \right\|_{L^2(0,\infty;V^*)} < +\infty.
\]
Finally, the continuous dependence on prescribed data (and uniqueness) for global-in-time strong solutions to the initial-boundary value problem can be proved  immediately thanks to Theorem \ref{thm:main}; indeed, the constant $C$ in \eqref{conti-dep} is independent of $T$. This completes the proof.
\end{proof}

Let us move on to a proof for Theorem \ref{thm:asympt}.

\begin{proof}[Proof of Theorem {\rm \ref{thm:asympt}}]
Let $ z $ be the global-in-time strong solution to \eqref{eq:vi} and \eqref{bcic} (see Theorem \ref{thm:global} for the existence).
Define subsets $ J_1 $, $ J_2 $ and $J $ of $ \R_+=(0,\infty)$ by 
\begin{align*}
J_1 &:= \left\{ s \in \R_+ \colon \dind \left( \partial_t z(s) \right) -\Delta z(s) + \sigma z(s) \ni f(s) \quad \mbox{in } \ L^2(\Omega) \right\},\\
J_2 &:= \left\{ s \in \R_+ \colon \left\| -\Delta z(s) + \sigma z(s) \right\|^2_{L^2(\Omega)} \leq 2 \left( \| \hat{f} \wedge (-\Delta z_0 + \sigma z_0) \|^2_{L^2(\Omega)} + \left\| f \right\|^2_{L^\infty(0,T;L^2(\Omega))} \right) \right\},\\
J &:= J_1 \cap J_2,
\end{align*}
where $ \hat{f} $ is a function given in $ (\mathrm{ii})_{T=\infty} $ of Theorem \ref{thm:asympt}.
We first claim that $\R_+ \setminus J$ is negligible, i.e., $|\R_+ \setminus J| = 0$.
Indeed, clearly $ \R_+ \setminus J_1 $ is negligible.
In order to show that $ \R_+ \setminus J_2 $ is negligible, let $ T>0 $, $ m \in \N $ and set $ \tau = T/m > 0 $.
As in the proof of Theorem \ref{thm:main}, define $ f_k \in L^2(\Omega) $ for $ k=1,\dots,m $ by \eqref{eq:defoffk} and let $ z_k \in X \cap V $ be the strong solution to \eqref{eq:dis}.
Moreover, define $ \ztaubar \in L^2(0,T;X \cap V) $ by \eqref{def:ztaubar}.
From \eqref{eq:ztaubar} and Fatou's lemma, we observe that
\[
\int^T_0 \liminf_{\tau \to 0_+} \left\| \Delta \ztaubar(t) \right\|^2_{L^2(\Omega)} \d t \leq \liminf_{\tau \to 0_+} \left\| \Delta \ztaubar \right\|^2_{L^2(0,T;L^2(\Omega))} \leq C,
\]
which implies
\[
\liminf_{\tau \to 0_+} \left\| \Delta \ztaubar(t) \right\|_{L^2(\Omega)} < +\infty \quad \mbox{ for a.e.~} t \in (0,T).
\]
Thus for a.e.~$ t \in (0,T) $, one can take a (not relabeled) subsequence of $ (\tau) $ (which may depend on $ t $) such that
\[
\ztaubar(t) \wto z(t) \quad \mbox{ weakly in } \ X.
\]
Therefore, by virtue of the weak lower-semicontinuity of $ \left\| \, \cdot \, \right\|^2_{L^2(\Omega)} $ in $L^2(\Omega)$, it follows from \eqref{eq:pointwise} that
\begin{align*}
\left\| -\Delta z(t) + \sigma z(t) \right\|^2_{L^2(\Omega)} &\leq \liminf_{\tau \to 0_+} \left\| -\Delta \ztaubar(t) + \sigma \ztaubar(t) \right\|^2_{L^2(\Omega)}\\
&\leq 2 \liminf_{\tau \to 0_+} \left( \| \hat{f} \wedge \left( -\Delta z_0 + \sigma z_0 \right) \|^2_{L^2(\Omega)} + \| \ftaubar(t) \|^2_{L^2(\Omega)} \right)\\
&\leq 2 \left( \| \hat{f} \wedge \left( -\Delta z_0 + \sigma z_0 \right) \|^2_{L^2(\Omega)} + \left\| f \right\|^2_{L^\infty(0,T;L^2(\Omega))} \right)
\end{align*}
for a.e.~$ t \in (0,T) $.
Here we used the fact that $ \| \ftaubar(t) \|_{L^2(\Omega)} \leq \| f \|_{L^\infty(0,T;L^2(\Omega))} < +\infty $ for $ t \in (0,T) $ by assumption.
Thus $ (0,T) \setminus J_2 $ is negligible; moreover, from the arbitrariness of $ T>0 $, one can deduce that $ \R_+ \setminus J_2 $ is negligible.
Therefore $ \R_+ \setminus J $ is negligible.

Using \eqref{dtz-bdd} and assumption (v) of Theorem \ref{thm:asympt}, we see that
\[
\sum_{n=0}^\infty \int_{(n,n+1) \cap J} \Big( \left\| \partial_t z(s) \right\|_V^2 + \left\| f(s)-f_\infty \right\|_{V^*}^2 \Big) \, \d s < \infty.
\]
Hence one can take a sequence $(s_n)$ in $J$ such that $n < s_n < n+1$ for $n \in \N$ and
\begin{alignat*}{4}
\partial_t z(s_n) &\to 0 & \quad &\mbox{ strongly in } \ V,\nonumber\\
f(s_n) &\to f_\infty &&\mbox{ strongly in } \ V^*
\end{alignat*}
as $n \to \infty$. Set $\eta := \Delta z - \sigma z + f \in L^2_\loc([0,\infty);L^2(\Omega))$. Testing it by $\partial_t z$ along with the relation $\eta \partial_t z = 0$ a.e.~in $\Omega \times \R_+$, we find that
\begin{align*}
\frac{1}{2} \frac{\d}{\d t} \left\| \nabla z(t) \right\|^2_{L^2(\Omega)} + \frac{\sigma}{2} \frac{\d}{\d t} \left\| z(t) \right\|^2_{L^2(\Omega)} &= \left\langle f(t), \partial_t z(t) \right\rangle_V\\
&= \left\langle f(t)-f_\infty, \, \partial_t z(t) \right\rangle_V + \left\langle f_\infty, \, \partial_t z(t) \right\rangle_V\\
&\leq \left\| f(t)-f_\infty \right\|_{V^*} \left\| \partial_t z(t) \right\|_V + \frac{\d}{\d t} \left\langle f_\infty, z(t) \right\rangle_V
\end{align*}
for a.e.~$ t \in \R_+ $. Thus
\begin{align*}
\mel{
\frac12\left\| \nabla z(t) \right\|^2_{L^2(\Omega)} + \frac\sigma2 \left\| z(t) \right\|^2_{L^2(\Omega)} - \langle f_\infty, z(t) \rangle_V
}\\
&\leq \frac12\left\| \nabla z_0 \right\|^2_{L^2(\Omega)} + \frac\sigma2 \left\| z_0 \right\|^2_{L^2(\Omega)} - \langle f_\infty, z_0 \rangle_V + \int^t_0 \left\| f(s)-f_\infty \right\|_{V^*} \left\| \partial_t z(s) \right\|_V \d s
\end{align*}
for all $t \in \R_+$. Therefore (with the aid of Poincar\'{e}'s inequality when $\sigma=0$) one has
\[
\left\| z(t) \right\|^2_V \leq C \left( \left\| z_0 \right\|^2_V + \left\| f_\infty \right\|^2_{V^*} + \left\| f-f_\infty \right\|_{L^2(0,\infty;V^*)} \left\| \partial_t z \right\|_{L^2(0,\infty;V)} \right)
\]
for all $t \in \R_+$. Hence we derive that $ z \in L^\infty(0,\infty;V)$.
Moreover, since $ s_n \in J_2 $ for all $ n \in \N $, it follows that
\begin{align*}
\left\| - \Delta z(s_n) + \sigma z(s_n) \right\|^2_{L^2(\Omega)} &\leq 2 \left( \| \hat{f} \wedge \left( -\Delta z_0 + \sigma z_0 \right) \|^2_{L^2(\Omega)} + \left\| f \right\|^2_{L^\infty(0,\infty;L^2(\Omega))} \right),
\end{align*}
and thus, the sequence $(-\Delta z(s_n) + \sigma z(s_n))$ turns out to be bounded in $L^2(\Omega)$. Therefore, since $ (z(s_n)) $ is bounded in $ L^2(\Omega) $, so does $(\Delta z(s_n))$ in $L^2(\Omega)$.
Hence there exist a (not relabeled) subsequence of $(n)$ and $z_\infty \in X \cap V$ such that
\begin{alignat}{4}
z(s_n) &\to z_\infty & \quad &\mbox{ strongly in } \ V,\label{eq:zsnst}\\
\Delta z(s_n) &\to \Delta z_\infty &&\mbox{ weakly in } \ L^2(\Omega).\notag
\end{alignat}
Noting that $z(s_n) \leq z_0$ and $-\Delta z(s_n) + \sigma z(s_n) - f(s_n) \leq 0$ a.e.~in $\Omega$ for $n \in \N$, we readily find that
\begin{equation}\label{eq:zinfvi}
z_\infty \leq z_0 \quad \mbox{ and } \quad -\Delta z_\infty + \sigma z_\infty - f_\infty \leq 0 \quad \mbox{ a.e.~in } \ \Omega.
\end{equation}
Set $\eta(t) = \Delta z(t) - \sigma z(t) + f(t)$. Then, by \eqref{eq:zinfvi}, we have
\[
\eta(t) - \Delta \left( z(t) - z_\infty \right) + \sigma \left( z(t) - z_\infty \right) \geq f(t) - f_\infty \quad \mbox{ for a.e.~} t \in \R_+.
\]
Testing it by $\partial_t z \leq 0$ and using the relation $\eta \partial_t z = 0$ a.e.~in $\Omega \times \R_+$, we have
\[
\frac{1}{2} \frac{\d}{\d t} \left( \left\| \nabla (z(t) - z_\infty) \right\|^2_{L^2(\Omega)} + \sigma \left\| z(t) - z_\infty \right\|^2_{L^2(\Omega)} \right) \leq \langle f(t) - f_\infty, \, \partial_t z(t) \rangle_V,
\]
whence it follows that
\begin{equation}\label{eq:ztzinf}
\left\| z(t) - z_\infty \right\|^2_V \leq C \left( \left\| z(s_n) - z_\infty \right\|^2_V + \int^\infty_{s_n} \left\| f(t) - f_\infty \right\|_{V^*} \left\| \partial_t z(t) \right\|_V \d t \right)
\end{equation}
for all $t \geq s_n$. Since $\partial_t z \in L^2(0,\infty;V)$ (see \eqref{dtz-bdd}) and $f - f_\infty \in L^2(0,\infty;V^*)$, we deduce that
\begin{equation}\label{eq:ffpz}
\lim_{n \to \infty} \int^\infty_{s_n} \left\| f(t) - f_\infty \right\|_{V^*} \left\| \partial_t z(t) \right\|_V \d t = 0.
\end{equation}
Thus combining \eqref{eq:zsnst}, \eqref{eq:ztzinf} and \eqref{eq:ffpz}, we conclude that
\[
\left\| z(t) - z_\infty \right\|^2_V \to 0 \quad \mbox{ as } \ t \to \infty.
\]

Finally, we shall show that $z_\infty \in X \cap V$ is a strong solution to \eqref{eq:viinfty} and \eqref{bc} when \eqref{eq:xi} is satisfied. By virtue of Proposition \ref{pro:static}, there exists a unique strong solution $\zeta \in X \cap V$ to \eqref{eq:viinfty} and \eqref{bc}. We claim that $z_\infty = \zeta$. Indeed, $w(t) \equiv \zeta$ turns out to be the strong solution of the initial-boundary problem,
\begin{equation*}
\left\lbrace
\begin{alignedat}{4}
&\dind \left( \partial_t w \right) -\Delta w + \sigma w \ni f_\infty \quad &&\mbox{ in } \ \Omega \times \R_+,\\
&w = 0 && \mbox{ on } \ \ddom \times \R_+,\\
&\partial_\nu w = 0 && \mbox{ on } \ \ndom \times \R_+,\\
&w(\cdot,0) = \zeta \ &&\mbox{ in } \ \Omega.
\end{alignedat}
\right.
\end{equation*}
Using \eqref{eq:xi} and applying the comparison principle (see Proposition \ref{pro:comp}) to $z$ and $w$, we infer that
\[
\zeta(x) = w(x,t) \leq z(x,t) \quad \mbox{ for a.e.~} (x,t) \in \Omega \times \R_+. 
\]
Hence passing to the limit as $t \to +\infty$, we obtain
\[
\zeta \leq z_\infty \quad \mbox{ a.e.~in } \ \Omega.
\]
On the other hand, it is obvious that $z_\infty$ is a solution to \eqref{eq:static} with $ \psi = z_\infty $ and $ g = -\Delta z_\infty + \sigma z_\infty $.
Moreover, $ \zeta $ is also a solution to \eqref{eq:static} with $ \psi = z_0 $ and $ g = f_\infty $.
Therefore, by the comparison principle for \eqref{eq:static} in Proposition \ref{pro:static}, we can derive from \eqref{eq:zinfvi} that
\[
z_\infty \leq \zeta \quad \mbox{ a.e.~in } \ \Omega.
\]
Thus we get $ z_\infty = \zeta $ (cf.~this fact can also be checked as in \cite[\S 5]{AK}), which completes the proof.
\end{proof}

\section{Characterization as a singular limit}

In this section, we shall prove Theorem \ref{thm:conv}. First of all, we note that the well-posedness of the initial-boundary value problem \eqref{eq-ep}, \eqref{bcic-ep} can be verified under the assumptions of Theorem \ref{thm:conv}, as in~\cite[Theorem 1.8]{AK}, where only the case $\sigma = 0$ is explicitly treated.
However, the method of proofs developed there can also be applied to the case $\sigma \neq 0$ with only obvious modifications.

\begin{proof}[Proof of Theorem {\rm \ref{thm:conv}}]
Set $Z_\vep := z_\vep - z$ and $F_\vep := f_\vep - f$. By subtraction of \eqref{eq} from \eqref{eq-ep}, we see that
$$
\vep \partial_t z_\vep + \dind \left( \partial_t z_\vep \right) - \dind \left( \partial_t z \right) - \Delta Z_\vep + \sigma Z_\vep \ni F_\vep \quad \mbox{ a.e.~in } \ \Omega \times (0,T).
$$
Testing this by $\partial_t Z_\vep$ and using the monotonicity of $\dind$, we find that
\begin{align*}
\MoveEqLeft{
\vep \left\| \partial_t z_\vep(t) \right\|_{L^2(\Omega)}^2 - \vep \left( \partial_t z_\vep(t), \partial_t z(t)\right)_{L^2(\Omega)} + \frac 12 \frac \d{\d t} \left( \|\nabla Z_\vep(t)\|_{L^2(\Omega)}^2 + \sigma \|Z_\vep(t)\|_{L^2(\Omega)}^2 \right)
}\\
&\leq \left( F_\vep(t), \partial_t Z_\vep(t) \right)_{L^2(\Omega)}
= \dfrac \d{\d t} \left\langle F_\vep(t), Z_\vep(t) \right\rangle_{V} - \left\langle \partial_t F_\vep(t), Z_\vep(t) \right\rangle_V
\end{align*}
(see Proposition \ref{pro:leibniz} to verify the last equality), which along with Young's and Schwartz's inequalities yields
\begin{align*}
\MoveEqLeft{
\frac \vep 2 \left\| \partial_t z_\vep(t) \right\|_{L^2(\Omega)}^2 + \frac 12 \frac \d{\d t} \left( \left\| \nabla Z_\vep(t) \right\|_{L^2(\Omega)}^2 + \sigma \left\| Z_\vep(t) \right\|_{L^2(\Omega)}^2 \right)
}\\
&\leq \frac \vep 2 \left\| \partial_t z(t) \right\|_{L^2(\Omega)}^2 + \dfrac \d{\d t} \left\langle F_\vep(t), Z_\vep(t) \right\rangle_V + \left\| \partial_t F_\vep(t) \right\|_{V^*} \left\| Z_\vep(t) \right\|_V
\end{align*}
for a.e.~$t \in (0,T)$. Integrating both sides in time, we infer that
\begin{align*}
\MoveEqLeft{
\frac \vep 2 \int^t_0 \left\| \partial_t z_\vep(s) \right\|_{L^2(\Omega)}^2 \d s + \frac 12 \left( \|\nabla Z_\vep(t)\|_{L^2(\Omega)}^2 + \sigma \left\| Z_\vep(t) \right\|_{L^2(\Omega)}^2 \right)
}\\
&\leq \frac 12 \left( \left\| \nabla Z_\vep(0) \right\|_{L^2(\Omega)}^2 + \sigma \left\| Z_\vep(0) \right\|_{L^2(\Omega)}^2 \right) + \frac \vep 2 \int^T_0 \left\| \partial_t z(s) \right\|_{L^2(\Omega)}^2 \d s \\
&\quad + \left\langle F_\vep(t), Z_\vep(t) \right\rangle_V - \left\langle F_\vep(0), Z_\vep(0) \right\rangle_V + \int^t_0 \left\| \partial_t F_\vep(s) \right\|_{V^*} \left\| Z_\vep(s) \right\|_V \d s
\end{align*}
for any $t \in [0,T]$. Thus it follows (see~\cite[Lemma A.5]{HBF}) that
\begin{align*}
\MoveEqLeft{
\sup_{t \in [0,T]} \left\| Z_\vep(t) \right\|_{V}
}\\
&\leq C \Big( \left\| Z_\vep(0) \right\|_V + \sup_{t \in [0,T]} \left\| F_\vep(t) \right\|_{V^*} + \sqrt{\vep} \left\| \partial_t z \right\|_{L^2(0,T;L^2(\Omega))} + \left\| \partial_t F_\vep \right\|_{L^1(0,T;V^*)} \Big)
\end{align*}
for some constant $C>0$ independent of $T$. This completes the proof.
\end{proof}

\appendix

\section{Discretization and interpolation}\label{sec:intpl}

In this appendix, we shall prove some estimates for time-discrete approximations of $f \in L^2(0,T;L^2(\Omega)) \cap W^{1,2}(0,T;V^*)$. Let $m \in \N$, $\tau = T/m > 0$ and $t_k = k\tau$ for $k = 0,1,\ldots,m$. Moreover, let $\{f_k\}_{k=0,1,\ldots,m}$ be defined by \eqref{eq:defoffk} and \eqref{eq:defoff0}. Then we note that $f_k \in L^2(\Omega)$ for $ k=1,\dots,m $ and $f_0 \in V^*$. Furthermore, denote by $\ftau$ and $\ftaubar$ the piecewise linear and constant interpolants of $\{f_k\}_{k=0,1,\ldots,m}$ defined by \eqref{def:ftau} and \eqref{def:ftaubar}, respectively (in addition, we set $\ftau(\cdot,0)=\ftaubar(\cdot,0):=f_0$).

\begin{prop}\label{pro:inter}
Let $T \in (0,\infty)$ and let $f \in W^{1,2}(0,T;V^*)$. It then holds that
\[
\left\| \partial_t \ftau \right\|^2_{L^2(0,T;V^*)} = \tau \sum^m_{k=1} \left\| \frac{f_k-f_{k-1}}{\tau} \right\|^2_{V^*} \leq 4 \left\| \partial_t f \right\|^2_{L^2(0,T;V^*)}
\]
for any $m \in \N$ {\rm (}or $\tau>0${\rm )}.
\end{prop}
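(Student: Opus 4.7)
The equality in the displayed estimate is immediate from the definition of $\ftau$ in \eqref{def:ftau}: on each subinterval $(t_{k-1},t_k)$ we have $\partial_t \ftau \equiv (f_k-f_{k-1})/\tau$, which is constant in time, so integrating its squared $V^*$-norm over $(0,T)$ produces the right-hand side. The nontrivial content is the estimate by $4\|\partial_t f\|_{L^2(0,T;V^*)}^2$, and my plan is to treat the index $k=1$ separately from $k\geq 2$, exploiting the fundamental theorem of calculus for $V^*$-valued Sobolev functions (which is legitimate because $W^{1,2}(0,T;V^*)\subset C([0,T];V^*)$, so in particular $f(0)$ makes sense).

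For $k\geq 2$, a translation in the integral defining $f_{k-1}$ gives
\[
f_k-f_{k-1}
= \frac{1}{\tau}\int_{t_{k-1}}^{t_k}\bigl[f(s)-f(s-\tau)\bigr]\,\d s
= \frac{1}{\tau}\int_{t_{k-1}}^{t_k}\int_{s-\tau}^{s}\partial_t f(r)\,\d r\,\d s\quad\mbox{in }V^*.
\]
I would then take $V^*$-norms, apply the Bochner version of Minkowski's inequality, and use Cauchy--Schwarz twice (once in $r$ on the length-$\tau$ window, once in $s$) to conclude
\[
\tau\,\Bigl\|\frac{f_k-f_{k-1}}{\tau}\Bigr\|_{V^*}^{2}
\leq \frac{1}{\tau}\int_{t_{k-1}}^{t_k}\int_{s-\tau}^{s}\|\partial_t f(r)\|_{V^*}^{2}\,\d r\,\d s
\leq \int_{t_{k-2}}^{t_k}\|\partial_t f(r)\|_{V^*}^{2}\,\d r,
\]
the last step by Fubini (the set $\{(s,r)\colon t_{k-1}<s<t_k,\ s-\tau<r<s\}$ lies in $(t_{k-2},t_k)$ and each $r$ is hit for a set of $s$ of measure at most $\tau$). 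Summing over $k=2,\dots,m$, each point of $(0,T)$ is covered by at most two of the intervals $(t_{k-2},t_k)$, yielding a contribution bounded by $2\|\partial_t f\|_{L^2(0,T;V^*)}^{2}$.

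For $k=1$, since $f_0=f(\cdot,0)$ rather than an average, I would write
\[
f_1-f_0 = \frac{1}{\tau}\int_{0}^{\tau}\bigl[f(s)-f(0)\bigr]\,\d s
= \frac{1}{\tau}\int_{0}^{\tau}\int_{0}^{s}\partial_t f(r)\,\d r\,\d s,
\]
and the same Minkowski/Cauchy--Schwarz calculation, specialized to $s\in(0,\tau)$, yields $\tau\,\|(f_1-f_0)/\tau\|_{V^*}^{2}\leq\int_{0}^{\tau}\|\partial_t f(r)\|_{V^*}^{2}\,\d r$. Adding this to the sum for $k\geq 2$ gives a total bound of $3\|\partial_t f\|_{L^2(0,T;V^*)}^{2}$, which is $\leq 4\|\partial_t f\|_{L^2(0,T;V^*)}^{2}$ as claimed.

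The only real subtlety to pin down is the justification of the identity $f(s)-f(s-\tau)=\int_{s-\tau}^{s}\partial_t f(r)\,\d r$ in $V^*$ for a.e.~$s$ and of $f(s)-f(0)=\int_{0}^{s}\partial_t f(r)\,\d r$ for every $s\in[0,T]$; both follow from the standard characterization of Sobolev--Bochner functions into the reflexive space $V^*$. Everything else is manipulation of scalar integrals.
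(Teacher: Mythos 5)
Your proof is correct and follows essentially the same route as the paper: split off $k=1$, use the fundamental theorem of calculus in $V^*$ to write $f_k-f_{k-1}$ as a double integral of $\partial_t f$, apply Cauchy--Schwarz, and sum using the fact that the windows $(t_{k-2},t_k)$, $k\geq 2$, cover each point of $(0,T)$ at most twice. The only difference is cosmetic: by keeping the inner window $(s-\tau,s)$ of length $\tau$ intact through Cauchy--Schwarz and then invoking Fubini, you land on the constant $3$, whereas the paper first enlarges the window to $(t_{k-2},t_k)$ of length $2\tau$ and so obtains $4$; both are of course compatible with the claimed bound.
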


\begin{proof}
Let $v \in V$ be fixed. For any $k = 2,\ldots,m$, we see that
\begin{align*}
\left\langle \frac{f_k-f_{k-1}}{\tau}, v \right\rangle_V 
&= \frac{1}{\tau^2} \left( \int^{t_k}_{t_{k-1}} \left\langle f(s), v \right\rangle_V \d x - \int^{t_{k-1}}_{t_{k-2}} \left\langle f(s), v \right\rangle_V \d s \right)\\
&= \frac{1}{\tau^2} \int^{t_k}_{t_{k-1}} \left\langle f(s) - f(s-\tau), \, v \right\rangle_V \d s.
\end{align*}
By virtue of the fundamental theorem of calculus, it follows that
\begin{align*}
\frac{1}{\tau^2} \int^{t_k}_{t_{k-1}} \left\langle f(s) - f(s-\tau), \, v \right\rangle_V \d s 
&= \frac{1}{\tau^2} \int^{t_k}_{t_{k-1}} \int^s_{s-\tau} \left\langle \partial_t f(r), v \right\rangle_V \d r \,\d s\\
&\leq \left( \frac{1}{\tau^2} \int^{t_k}_{t_{k-1}} \int^s_{s-\tau} \left\| \partial_t f(r) \right\|_{V^*} \d r \,\d s \right) \|v\|_V\\
&\leq \left( \frac{1}{\tau^2} \int^{t_k}_{t_{k-1}} \int^{t_k}_{t_{k-2}} \left\| \partial_t f(r) \right\|_{V^*} \d r\, \d s \right) \|v\|_V\\
&= \left( \frac{1}{\tau} \int^{t_k}_{t_{k-2}} \left\| \partial_t f(r) \right\|_{V^*} \d r \right) \|v\|_V.
\end{align*}
Therefore one can deduce that
\begin{equation}\label{est:sabun}
\left\| \frac{f_k-f_{k-1}}{\tau} \right\|_{V^*} \leq \frac{1}{\tau} \int^{t_k}_{t_{k-2}} \left\| \partial_t f(r) \right\|_{V^*} \d r
\end{equation}
for $k=2,\dots,m$. Moreover, noting that
\begin{align*}
\left\langle \frac{f_1-f_0}{\tau}, v \right\rangle_V &= \frac{1}{\tau} \left( \frac{1}{\tau} \int^{t_1}_{t_0} \left\langle f(s), v \right\rangle_V \d s - \left\langle f(0), v \right\rangle_V \right)\\
&= \frac{1}{\tau^2} \int^\tau_0 \left\langle f(s)-f(0), v \right\rangle_V \d s\\
&= \frac{1}{\tau^2} \int^\tau_0 \int^s_0 \left\langle \partial_t f(r), v \right\rangle_V \d r \, \d s\\
&\leq \left( \frac{1}{\tau^2} \int^\tau_0 \int^s_0 \left\| \partial_t f(r) \right\|_{V^*} \d r \, \d s \right) \|v\|_V\\
&\leq \left( \frac{1}{\tau} \int^\tau_0 \left\| \partial_t f(r) \right\|_{V^*} \d r  \right) \|v\|_V,
\end{align*}
we obtain
\begin{equation}\label{est:sabun1}
\left\| \frac{f_1-f_0}{\tau} \right\|_{V^*} \leq \frac{1}{\tau} \int^\tau_0 \left\| \partial_t f(r) \right\|_{V^*} \d r.
\end{equation}
Therefore we derive from \eqref{est:sabun} and \eqref{est:sabun1} that
\begin{align*}
\tau \sum^m_{k=1} \left\| \frac{f_k-f_{k-1}}{\tau} \right\|^2_{V^*} &\leq \tau \left( \frac{1}{\tau} \int^\tau_0 \left\| \partial_t f(r) \right\|_{V^*} \d r \right)^2 + \tau \sum^m_{k=2} \left( \frac{1}{\tau} \int^{t_k}_{t_{k-2}} \left\| \partial_t f(r) \right\|_{V^*} \d r \right)^2\\
&\leq \int^\tau_0 \left\| \partial_t f(r) \right\|^2_{V^*} \d r + 2 \sum^m_{k=2} \int^{t_k}_{t_{k-2}} \left\| \partial_t f(r) \right\|^2_{V^*} \d r\\
&\leq 4 \left\| \partial_t f \right\|^2_{L^2(0,T;V^*)},
\end{align*}
which completes the proof.
\end{proof}

\begin{prop}\label{pro:f}
Let $f \in L^2(0,T;L^2(\Omega))$. It then holds that
\begin{alignat*}{4}
\ftaubar &\to f & \quad &\mbox{ strongly in } \ L^2(0,T;L^2(\Omega))
\end{alignat*}
as $\tau \to 0_+$.
\end{prop}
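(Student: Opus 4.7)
The strategy is the standard three-step density argument for a bounded linear averaging operator. Define, for each $\tau = T/m > 0$, the linear map $\Pi_\tau : L^2(0,T;L^2(\Omega)) \to L^2(0,T;L^2(\Omega))$ by $\Pi_\tau f := \overline{f}_\tau$, where $\overline{f}_\tau(\cdot,t) = f_k = \tau^{-1} \int_{t_{k-1}}^{t_k} f(\cdot,s)\,\d s$ for $t \in (t_{k-1},t_k]$. I plan to show (a) a uniform bound $\|\Pi_\tau\|_{\mathrm{op}} \leq 1$; (b) convergence on a dense subclass; (c) extension to all of $L^2(0,T;L^2(\Omega))$ by an $\varepsilon/3$ argument.

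\textbf{Step 1 (uniform bound).} By Jensen's inequality applied in $L^2(\Omega)$,
\[
\|f_k\|_{L^2(\Omega)}^2 = \left\| \frac{1}{\tau}\int_{t_{k-1}}^{t_k} f(\cdot,s)\,\d s \right\|_{L^2(\Omega)}^2 \leq \frac{1}{\tau}\int_{t_{k-1}}^{t_k} \|f(\cdot,s)\|_{L^2(\Omega)}^2 \,\d s.
\]
Summing over $k=1,\dots,m$ gives
\[
\|\overline{f}_\tau\|_{L^2(0,T;L^2(\Omega))}^2 = \tau \sum_{k=1}^m \|f_k\|_{L^2(\Omega)}^2 \leq \|f\|_{L^2(0,T;L^2(\Omega))}^2,
\]
so $\Pi_\tau$ is a contraction uniformly in $\tau$.

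\textbf{Step 2 (convergence on a dense subclass).} Fix $g \in C([0,T];L^2(\Omega))$; such $g$ is uniformly continuous with values in $L^2(\Omega)$. For $t \in (t_{k-1},t_k]$,
\[
\overline{g}_\tau(\cdot,t) - g(\cdot,t) = \frac{1}{\tau}\int_{t_{k-1}}^{t_k} \bigl(g(\cdot,s) - g(\cdot,t)\bigr) \d s,
\]
hence $\|\overline{g}_\tau(\cdot,t) - g(\cdot,t)\|_{L^2(\Omega)} \leq \sup_{|s-t|\leq\tau} \|g(\cdot,s) - g(\cdot,t)\|_{L^2(\Omega)}$, which tends to $0$ uniformly in $t \in [0,T]$ as $\tau \to 0_+$ by uniform continuity. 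Therefore $\overline{g}_\tau \to g$ in $L^\infty(0,T;L^2(\Omega))$, and in particular in $L^2(0,T;L^2(\Omega))$.

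\textbf{Step 3 (density and conclusion).} Since $C([0,T];L^2(\Omega))$ is dense in $L^2(0,T;L^2(\Omega))$, for any $\varepsilon > 0$ choose $g \in C([0,T];L^2(\Omega))$ with $\|f - g\|_{L^2(0,T;L^2(\Omega))} < \varepsilon/3$. By Step 1 applied to $f-g$,
\[
\|\overline{f}_\tau - \overline{g}_\tau\|_{L^2(0,T;L^2(\Omega))} \leq \|f - g\|_{L^2(0,T;L^2(\Omega))} < \varepsilon/3,
\]
and by Step 2, $\|\overline{g}_\tau - g\|_{L^2(0,T;L^2(\Omega))} < \varepsilon/3$ for all sufficiently small $\tau$. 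The triangle inequality then gives $\|\overline{f}_\tau - f\|_{L^2(0,T;L^2(\Omega))} < \varepsilon$ for all small $\tau$, proving the claim.

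No step is genuinely difficult here; the only care required is in choosing the correct uniform bound (Jensen rather than a naive estimate) so that the density step closes cleanly.
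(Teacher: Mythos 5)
Your proof is correct and follows essentially the same density-plus-contraction argument as the paper: the paper approximates $f$ by $g_\vep \in C_c(0,T;L^2(\Omega))$, establishes $\|\overline{(g_\vep)}_\tau - g_\vep\|_{L^2} \to 0$ via uniform continuity, and uses the same Jensen-type contraction bound $\|\overline{(g_\vep - f)}_\tau\|_{L^2} \leq \|g_\vep - f\|_{L^2}$ to close the $\varepsilon/3$ triangle inequality. The only cosmetic difference is your choice of $C([0,T];L^2(\Omega))$ versus the paper's $C_c(0,T;L^2(\Omega))$ as the dense subclass, which has no bearing on the argument.
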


\begin{proof}
For any $\vep > 0$, one can take $g_\vep \in C_c(0,T;L^2(\Omega))$ such that $\|f - g_\vep\|_{L^2(0,T;L^2(\Omega))} < \vep$ (see, e.g.,~\cite{CaHa}). From the uniform continuity of $g_\vep$ on $[0,T]$ in the strong topology of $L^2(\Omega)$, it follows that
\begin{align*}
\big\| \overline{(g_\vep)}_\tau - g_\vep \big\|^2_{L^2(0,T;L^2(\Omega))} 
&= \sum^m_{k=1} \int^{t_k}_{t_{k-1}} \left\| \frac{1}{\tau} \int^{t_k}_{t_{k-1}} g_\vep(s) \, \d s  - g_\vep(t) \right\|_{L^2(\Omega)}^2 \d t\\
&\leq \sum^m_{k=1} \int^{t_k}_{t_{k-1}} \left( \frac{1}{\tau} \int^{t_k}_{t_{k-1}} \|g_\vep(s)-g_\vep(t)\|_{L^2(\Omega)}  \, \d s  \right)^2 \d t\\
&\leq T \sup_{\substack{t,s \in (0,T) \\ |t-s|\leq\tau}} \left\| g_\vep(s)-g_\vep(t) \right\|^2_{L^2(\Omega)}  \to 0
\end{align*}
as $\tau \to 0_+$. We also find that
\[
\big\| \overline{(g_\vep)}_\tau - \overline{f}_\tau \big\|_{L^2(0,T;L^2(\Omega))}^2 = \big\| \overline{(g_\vep -f)}_\tau \big\|_{L^2(0,T;L^2(\Omega))}^2 \leq \left\| g_\vep - f \right\|_{L^2(0,T;L^2(\Omega))}^2 \to 0
\]
as $\vep \to 0_+$. Combining all these facts, we obtain the desired fact.
\end{proof}

\section{A Leibniz rule}\label{sec:leibniz}

In this section, we shall provide a slightly generalized Leibniz rule for vector-valued functions.
Let $ H $ be a Hilbert space equipped with an inner product $ (\cdot, \cdot)_H $ whose dual space $ H^* $ is identified with itself and let $ V $ be a Banach space continuously embedded into $ H $ such that
\[
V \hookrightarrow H \simeq H^* \hookrightarrow V^*
\]
and
\[
\left\langle u, v \right\rangle_V = \left( u, v \right)_H \quad \mbox{ for } \ u \in H, \ v \in V.
\]

If $ z \in W^{1,2}(0,T;V) $ and $ f \in W^{1,2}(0,T;V^*) $, then a standard Leibniz rule clearly holds, i.e., the function $ t \mapsto \langle f(t), z(t) \rangle_V $ is absolutely continuous on $ [0,T] $ and
\begin{equation}\label{eq:leibniz}
\frac{\d}{\d t} \left\langle f(t), z(t) \right\rangle_V = \left\langle \partial_t f(t), z(t) \right\rangle_V + \left\langle f(t), \partial_t z(t) \right\rangle_V \quad \mbox{ for a.e.~} t \in (0,T).
\end{equation}
As for the case where $ \partial_t z(t) $ may not lie on $ V $, our result reads,

\begin{prop}\label{pro:leibniz}
Let $ z \in W^{1,2}(0,T;H) \cap L^2(0,T;V) $ and $ f \in L^2(0,T;H) \cap W^{1,2}(0,T;V^*) $.
Then the function $ t \mapsto \langle f(t),z(t) \rangle_V $ is well defined and absolutely continuous on $ [0,T] $ and
\[
\frac{\d}{\d t} \left\langle f(t),z(t) \right\rangle_V = \left\langle \partial_t f(t), z(t) \right\rangle_V + \left( f(t), \partial_t z(t) \right)_H \quad \mbox{ for a.e.~} t \in (0,T).
\]
\end{prop}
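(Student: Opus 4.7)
The plan is a time-mollification argument: regularize $z$ and $f$ in time to reduce to the smooth case, apply the classical Leibniz rule to the mollified pair, then pass to the limit exploiting the extra $L^2(0,T;H)$-regularity of $f$. After extending $z$ and $f$ to a slightly larger interval preserving their stated regularities (for instance by reflection across the endpoints), let $\rho_\varepsilon$ denote a standard nonnegative smooth mollifier on $\mathbb R$ and set $z_\varepsilon := \rho_\varepsilon * z$ and $f_\varepsilon := \rho_\varepsilon * f$. Then $z_\varepsilon, f_\varepsilon$ are $C^\infty$ in time with values in $V$ and $V^*$, respectively, and standard mollifier estimates yield $z_\varepsilon \to z$ in $W^{1,2}(0,T;H) \cap L^2(0,T;V)$ and $f_\varepsilon \to f$ in $W^{1,2}(0,T;V^*) \cap L^2(0,T;H)$ as $\varepsilon \to 0_+$. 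Crucially, since $f \in L^2(0,T;H)$, one also has $f_\varepsilon(t) \in H$ for every $t$.

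For the regularized pair the classical Leibniz rule gives, for any $s,t \in [0,T]$,
\begin{align*}
\langle f_\varepsilon(t), z_\varepsilon(t) \rangle_V - \langle f_\varepsilon(s), z_\varepsilon(s) \rangle_V
&= \int_s^t \langle \partial_r f_\varepsilon(r), z_\varepsilon(r) \rangle_V \, \d r \\
&\quad + \int_s^t \langle f_\varepsilon(r), \partial_r z_\varepsilon(r) \rangle_V \, \d r.
\end{align*}
Using $f_\varepsilon(r) \in H$ together with the identification $V \hookrightarrow H \simeq H^* \hookrightarrow V^*$, the last integrand equals $(f_\varepsilon(r), \partial_r z_\varepsilon(r))_H$, and similarly the LHS pairings reduce to the $H$-inner products $(f_\varepsilon(\cdot), z_\varepsilon(\cdot))_H$.

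Now pass to the limit $\varepsilon \to 0_+$. Since $z \in C([0,T];H)$, we have $z_\varepsilon(t) \to z(t)$ in $H$ for every $t \in [0,T]$. At each Lebesgue point $t$ of $f$ viewed as an $L^2(0,T;H)$-function (hence a.e.~$t$), one has $f_\varepsilon(t) \to f(t)$ in $H$, so $(f_\varepsilon(t), z_\varepsilon(t))_H \to (f(t), z(t))_H$ for a.e.~$t$. The two integrals on the right converge by strong-strong pairing, in $L^2(0,T;V^*) \times L^2(0,T;V)$ and $L^2(0,T;H) \times L^2(0,T;H)$, respectively. Defining $\Psi(t) := (f(t), z(t))_H$ (for a.e.~$t$) we obtain
\[
\Psi(t) - \Psi(s) = \int_s^t \langle \partial_r f(r), z(r) \rangle_V \, \d r + \int_s^t (f(r), \partial_r z(r))_H \, \d r \quad \text{for a.e.~} s,t \in [0,T].
\]
Since the RHS is absolutely continuous in $t$, $\Psi$ admits an AC representative on $[0,T]$. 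For a.e.~$t$ one has $z(t) \in V$ and $\langle f(t), z(t) \rangle_V = (f(t), z(t))_H$ by the embedding identification; hence the AC representative coincides with $t \mapsto \langle f(t), z(t) \rangle_V$ wherever the latter is a priori defined, extending it to all of $[0,T]$ as an AC function. Differentiating the integral identity then yields the claimed formula.

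The main technical point is the identification $\langle f_\varepsilon(r), \partial_r z_\varepsilon(r) \rangle_V = (f_\varepsilon(r), \partial_r z_\varepsilon(r))_H$, which is not available from the classical Leibniz rule in the pair $W^{1,2}(0,T;V) \times W^{1,2}(0,T;V^*)$ alone; it is precisely the role of the extra assumption $f \in L^2(0,T;H)$, and it is what makes the formula meaningful when $\partial_r z$ need not belong to $L^2(0,T;V)$. Secondary care is required in selecting the continuous representative of $\langle f(\cdot), z(\cdot) \rangle_V$ so as to match $\Psi$ on a full-measure set.
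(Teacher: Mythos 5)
Your proof is correct and takes essentially the same route as the paper: extend to a larger interval, mollify in time, apply the classical Leibniz rule, exploit $f \in L^2(0,T;H)$ to rewrite the duality pairing $\langle f_\varepsilon, \partial_t z_\varepsilon \rangle_V$ as the $H$-inner product, then pass to the limit and select the absolutely continuous representative. The only cosmetic difference is that you mollify both $f$ and $z$, whereas the paper mollifies $z$ alone and applies the standard $W^{1,2}(0,T;V) \times W^{1,2}(0,T;V^*)$ Leibniz rule to the pair $(f, z_n)$, which spares a couple of limit passages; the identification you single out as the technical crux is indeed the heart of the paper's argument as well.
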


\begin{proof}
Let $ \bar{z}=\bar{z}(x,t) \in W^{1,2}(\R;H) \cap L^2(\R;V) $ be an extension of $ z $ onto $ \R $, that is, $ \bar{z}|_{[0,T]} = z $.
Let $ (\rho_n) \subset C^\infty_c (\R) $ be a sequence of mollifiers and set $ z_n := \bar{z} \ast \rho_n $, i.e., $ z_n(x,t) = \int_{\R} \bar{z}(x,s) \rho_n(t-s) \, \d s $ for $ (x,t) \in \Omega \times \R $ and $ n \in \N $.
Then $ (z_n) \subset C^\infty(\R;V) $ and it satisfies that
\begin{equation}\label{eq:density1}
z_n \to \bar{z} \quad \mbox{ in } \ W^{1,2}(\R;H) \cap L^2(\R;V).
\end{equation}
Moreover, there exists a (not relabeled) subsequence of $ (n) $ such that
\begin{equation}\label{eq:density2}
z_n(t) \to \bar{z}(t) \quad \mbox{ in } \ V \quad \mbox{ for a.e.~} t \in \R.
\end{equation}
The standard Leibniz rule \eqref{eq:leibniz} implies that
\begin{align*}
\frac{\d}{\d t} \left\langle f(t), z_n(t) \right\rangle_V &= \left\langle \partial_t f(t), z_n(t) \right\rangle_V + \left\langle f(t), \partial_t z_n(t) \right\rangle_V\\
&= \left\langle \partial_t f(t), z_n(t) \right\rangle_V + \left( f(t), \partial_t z_n(t) \right)_H \quad \mbox{ for a.e.~} t \in (0,T),
\end{align*}
which yields
\[
\left\langle f(t), z_n(t) \right\rangle_V - \left\langle f(s), z_n(s) \right\rangle_V = \int^t_s \left\langle \partial_t f(r), z_n(r) \right\rangle_V \d r + \int^t_s \left( f(r), \partial_t z_n(r) \right)_H \d r
\]
for all $ s,t \in [0,T] $.
Moreover, by virtue of \eqref{eq:density1} and \eqref{eq:density2}, one can deduce that
\begin{align*}
\left\langle f(t), z_n(t) \right\rangle_V &\to \left\langle f(t), z(t) \right\rangle_V \quad \mbox{ for a.e.~} t \in (0,T)
\end{align*}
and
\begin{alignat*}{4}
\int^t_s \left\langle \partial_t f(r), z_n(r) \right\rangle_V \d r &\to \int^t_s \left\langle \partial_t f(r), z(r) \right\rangle_V \d r &\quad& \mbox{ for all } \ s,t \in [0,T],\\
\int^t_s \left( f(r), \partial_t z_n(r) \right)_H \d r &\to \int^t_s \left( f(r), \partial_t z(r) \right)_H \d r &&\mbox{ for all } \ s,t \in [0,T].
\end{alignat*}
Thus we obtain
\begin{equation}\label{eq:leibniz-ae}
\left\langle f(t), z(t) \right\rangle_V - \left\langle f(s), z(s) \right\rangle_V = \int^t_s \left\langle \partial_t f(r), z(r) \right\rangle_V \d r + \int^t_s \left( f(r), \partial_t z(r) \right)_H \d r
\end{equation}
for a.e.~$ s,t \in (0,T) $.
Therefore the function $ t \mapsto \langle f(t), z(t) \rangle_V $ can be extended onto $ [0,T] $; indeed, the value of the function at arbitrary $ t \in [0,T] $ is uniquely determined as the limit of some Cauchy sequence of the form $ (\langle f(t_n), z(t_n) \rangle_V) $, where $ t_n \to t $ and \eqref{eq:leibniz-ae} holds with $ t=t_n $ and $ s=t_m $ for any $ m,n \in \N $ large enough.
Moreover, we infer that it is absolutely continuous on $ [0,T] $ and
\[
\frac{\d}{\d t} \left\langle f(t),z(t) \right\rangle_V = \left\langle \partial_t f(t), z(t) \right\rangle_V + \left( f(t), \partial_t z(t) \right)_H \quad \mbox{ for a.e.~} t \in (0,T),
\]
which completes the proof.
\end{proof}

\section*{Acknowledgments}

The authors sincerely thank the anonymous referee for his/her careful reading, valuable comments and helpful suggestions. The authors wish to express their gratitude to Professor Masato Kimura for several helpful comments and many stimulating conversations. This work was supported by the Research Institute for Mathematical Sciences, an International Joint Usage/Research Center located in Kyoto University.


\end{document}